\newcommand{\pair}[2]{\left\langle #1\;;\;#2 \right\rangle}
\newcommand{\R}{\mathbb R}
\newcommand{\N}{\mathbb N}
\renewcommand{\H}{\mathcal H}
\newcommand{\pb}{\{\,\cdot\,,\,\cdot\,\}}
\newcommand{\cmm}{[\,\cdot\,,\,\cdot\,]}
\renewcommand{\to}{\rightarrow}
\renewcommand{\epsilon}{\varepsilon}
\renewcommand{\phi}{\varphi}
\DeclareMathOperator{\id}{id}
\DeclareMathOperator{\ad}{ad}
\newcommand{\be}{\begin{equation}}
\newcommand{\ee}{\end{equation}}
\newcommand{\bse}{\begin{subequations}}
\newcommand{\ese}{\end{subequations}}
\newcommand{\ben}{\begin{enumerate}}
\newcommand{\een}{\end{enumerate}}
\newcommand{\bit}{\begin{itemize}}
\newcommand{\eit}{\end{itemize}}
\newtheorem{thm}{Theorem}[section]
\newtheorem{lem}[thm]{Lemma}
\newtheorem{prop}[thm]{Proposition}
\theoremstyle{definition}
\newtheorem{defn}[thm]{Definition}
\theoremstyle{remark}
\newtheorem{rem}[thm]{Remark}
\numberwithin{equation}{section}
\newcommand{\E}{\mathbb E}
\newcommand{\M}{\mathbb M}
\begin{document}
  \title{Poisson structure on predual of Banach Lie algebroid}
  \author{Tomasz Goli\'nski, Grzegorz Jakimowicz}
   \date{\small University of Bia{\l}ystok, Faculty of Mathematics\\
    Ciołkowskiego 1M, 15-245 Bia{\l}ystok, Poland\\
    email: tomaszg@math.uwb.edu.pl, g.jakimowicz@uwb.edu.pl
  }
\maketitle

\begin{abstract}
    We construct the linear Poisson structure on the predual bundle of a Banach Lie algebroid. It is an alternative approach to the already known results on the linear sub-Poisson structure on the dual bundle. We also discuss the existence of queer Banach Lie algebroids. An example of a precotangent bundle is presented.
\end{abstract}

\tableofcontents
  
\section{Introduction}

There is a deep connection between the theory of Poisson manifolds and the theory of Lie groupoids and algebroids. It's origin is the fact that an important class of linear Poisson brackets (called Lie--Poisson spaces) is obtained starting from Lie algebras. Unlike finite-dimensional case, in Banach case in general the dual of a Banach Lie algebra no longer carries the canonical linear Poisson structure. One needs to consider a predual space instead. Moreover, an additional condition is required to ensure the existence of Hamiltonian vector fields, see \cite[Theorem 4.2]{OR}. The notion of Banach Lie--Poisson space is useful e.g. in the study of integrable systems, see \cite{beltita05,Ratiu-grass,Oind,GO-grass,DO-L2,tumpach-bruhat,GT-momentum,GT-partiso-bial}.

An extension of this construction is the linear Poisson structure defined by a Lie algebroid. It was investigated in multiple papers, including \cite{karasev,coste87groupoides,courant90,grabowski97,marle,deleon2010}. This paper addresses the relationship between the notions of Banach Lie algebroid and Banach Poisson manifold. It presents a version of a construction well-known in the finite-dimensional setting, discovered in \cite{coste87groupoides,courant90}. As usual, in the Banach context there are some pitfalls and subtleties. One approach to this construction was presented in \cite{pelletier,cabau-pelletier_book}, where the sub-Poisson structure on the bundle dual to the Banach Lie algebroid was defined. In this paper we present an alternative approach by considering a predual bundle. This leads to considerable simplification of the resulting structure. Moreover, it is consistent with the situation presented in \cite{OR} for Banach Lie algebras. As an example, we describe the Poisson bracket obtained on the predual to the tangent bundle with respect to canonical symplectic and algebroid structures.

In Section~\ref{sec:basic} we begin by stating the necessary basic definitions of objects under consideration: Banach Poisson manifolds and Banach Lie algebroids. We also discuss some of the infinite-dimensional peculiarities of Poisson geometry, such as the existence of queer Poisson brackets depending on higher order derivatives (see \cite{BGT}). We address the possibility of existence of a queer Banach Lie algebroid, giving a partial negative result.

In Section~\ref{sec:poisson} the construction of the linear Poisson structure on the predual bundle to a Banach Lie algebroid is described. Necessary and sufficient conditions for this bracket to define a structure of a Banach Poisson manifold are given.

In Section~\ref{sec:alg} the converse path is discussed. Namely, given a Banach Poisson bundle with linear Poisson bracket, the Banach Lie algebroid structure on the dual bundle is constructed.

We conclude the paper with two examples. First, in Section~\ref{sec:weak_symplectic}, we discuss the case of weak symplectic structure on a precontangent bundle. In Section~\ref{sec:trivial} we discuss an example of a Lie algebroid structure on the trivial Banach bundle $\ell^2\times\ell^\infty\to\ell^2$.

\section{Basic notions}\label{sec:basic}
\subsection{Banach Poisson manifolds}

There are several approaches to the notion of a Poisson bracket on a Banach manifold. In this paper, following \cite{OR} and \cite{BGT}, we will use the following definition:
\begin{defn}\label{def:pm}
Banach Poisson manifold is a smooth manifold $P$ modelled on Banach spaces with a localizable Poisson bracket $\pb:C^\infty(P)\times C^\infty(P)\to C^\infty(P)$ given by the Poisson tensor $\Pi\in\Gamma(\bigwedge^2T^{**}P)$ 
\be\label{pb_tensor}\{f,g\} = \Pi(df,dg)\ee
such that $\pb$ satisfies Jacobi identity and the map 
$\sharp:T^*P\to T^{**}P$ given by
\be \label{sharp_def}\sharp\mu :=\Pi(\cdot, \mu)\ee
takes values in predual space
\be \label{sharp} \sharp(T^*P)\subset TP.\ee
\end{defn}

A difficulty in studying Poisson structures on Banach manifolds is the lack (in general) of bump functions (see e.g. \cite{bonic-frampton1966} or discussion in \cite{pelletier,BGJP}). In consequence, locally defined functions may not possess any extension to the whole manifold. As far as we know it is an open problem even if there is enough of globally defined smooth functions e.g. for their differentials to span the cotangent bundle.
Thus we assume explicitly that the Poisson bracket is localizable, i.e. it preserves the sheaf of locally-defined functions.

Note that the existence of the Poisson tensor does not follow automatically from the usual properties of the Poisson bracket and needs to be assumed separately. The counterexamples, known as \emph{queer Poisson brackets}, were discussed in \cite{BGT}. In that case, the value of Poisson bracket $\{f,g\}$ depends on higher derivatives of $f$ and $g$.

There are also many other more general notions of Poisson brackets defined only on a particular class of functions on a Banach manifold, see \cite{GRT-poisson-bial} for a recent comparison of those approaches. Some even more general work for other classes of infinite-dimensional manifolds can be found e.g. in \cite{neeb14,pelletier19,cabau-pelletier_book}.

In the sequel, we will be interested in Poisson brackets on a Banach vector bundle 
predual $E_*$ to a Banach vector bundle $E$ over a Banach manifold $M$. In this situation, one may distinguish a family of locally defined fiber-wise linear functions $\mathcal L_{loc}(E_*)$ in 
$C^\infty_{loc}(E_*)$. 
\begin{defn}
If a Poisson bracket $\pb$ on $E_*$ preserves the family of locally defined fiber-wise linear functions
\[ \{\mathcal L_{loc}(E_*), \mathcal L_{loc}(E_*)\} \subset \mathcal L_{loc}(E_*), \]
we say that it is linear.
\end{defn}

\subsection{Banach Lie algebroids}

Consider a Banach vector bundle $E$ over a Banach manifold $M$. We will denote by $j^1_m s$ the first jet of a local section $s\in\Gamma_{loc}(E)$ at the point $m\in M$, i.e. an equivalence class of sections, which coincide with $s$ up to terms of order 1:
\[ j^1_m s = \{ t\in \Gamma_{loc}(E) \;|\; t(m)=s(m), T_mt = T_ms \}. \]
The space of all first jets of functions at an arbitrary point will be denoted $\mathcal J^1(E)$ and forms a Banach bundle over $M$, see \cite[Section 1.12.1]{cabau-pelletier_book}.

Lie algebroids are infinitesimal counterparts of Lie groupoids and the notion comes from \cite{pradines67}. In Banach context it was studied e.g. in \cite{Anastasiei,pelletier,BGJP,OJS,OJS-fiber} and in the convenient setting e.g. in \cite[Definition 3.76]{cabau-pelletier_book}. Note that, just as for Poisson brackets, there may be a possibility of \emph{queer Lie algebroids} with bracket depending on higher derivatives, which we exclude from our considerations.
We also assume that the Lie algebroid bracket is localizable in the sense that it preserves the sheaf of sections of $\pi^{-1}(U)$ for the open sets $U\subset M$, see \cite{pelletier,BGJP} for details.

Both localizability and non-queerness conditions are automatically satisfied for Banach Lie algebroids of Banach Lie groupoids, see \cite{BGJP} and \cite[Remark 3.11]{cabau-pelletier_book}.

\begin{defn}\label{def:alg}
Let $E$ be a Banach vector bundle over $M$. The structure of a Banach Lie algebroid on $E$ is given as a localizable Lie bracket of sections $\cmm: \Gamma(E)\times\Gamma(E)\to \Gamma(E)$ and a smooth bundle morphism covering identity (called \emph{anchor}) $a: E \to TM$ such that
\ben[1.]
  \item $[X,fY]=a(X)f\cdot Y + f[X,Y]$ for all $X,Y\in \Gamma(E)$, $f\in C^\infty(M)$,
  \item $(\Gamma(E),\cmm)$ is a Banach Lie algebra.
  \item $\cmm$ depends only on the first jet of sections.
  \een
\end{defn}

A structure satisfying conditions 1. and 2. only will be called a \emph{queer Banach Lie algebroid}. Following the paper \cite{BGT}, where the construction of queer Poisson brackets depending on higher derivatives was exhibited, one could also expect the possibility of existence of Banach Lie algebroids with analogous behaviour, see also \cite[Remark 3.11]{cabau-pelletier_book}. However, it turns out that for a wide class of Banach bundles no such situation can occur. The proof of this fact is analogous to \cite[Proposition 2.2.2]{marle} but requires an assumption of existence of a Schauder basis (see e.g. \cite{lindenstrauss}).

\begin{thm}
If the typical fiber $\E$ of the Banach vector bundle $E\to M$ admits a Schauder basis, then there are no queer Banach Lie algebroids on $E$.
\end{thm}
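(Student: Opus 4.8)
The statement asserts that conditions 1 and 2 of Definition~\ref{def:alg}, in the presence of a Schauder basis of the fiber, already force condition 3, namely that $[X,Y](m)$ depends only on the first jets $j^1_m X$ and $j^1_m Y$. The plan is to follow the finite-dimensional scheme of \cite[Proposition 2.2.2]{marle}, substituting the Schauder basis and the assumed localizability for the finite local frame and the bump functions used there. Since the bracket is bilinear and antisymmetric, it suffices to prove that $[X,Y](m)=0$ whenever $j^1_m X=0$; antisymmetry then yields the analogous statement in the second slot, and bilinearity combines the two into dependence on $(j^1_m X, j^1_m Y)$ alone. Using localizability I would pass to a local trivialization $U\times\E$ about $m$ and compute with the restrictions of $X$ and $Y$.

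Let $\set{e_k}_{k\in\N}$ be a Schauder basis of $\E$ with continuous coordinate functionals $\set{e_k^*}$, and write $e_k$ also for the corresponding constant local frame sections over $U$. Put $X^k:=e_k^*\circ X\in C^\infty(U)$ and let $X_N:=\sum_{k=1}^N X^k e_k=P_N\circ X$, where $P_N$ is the $N$-th basis projection. Antisymmetry turns condition 1 into the first-slot Leibniz rule
\be [fX,Y]=f[X,Y]-(a(Y)f)\,X, \ee
so that by finite bilinearity
\be [X_N,Y]=\sum_{k=1}^N\bigl(X^k\,[e_k,Y]-(a(Y)X^k)\,e_k\bigr). \ee
If $j^1_m X=0$, i.e. $X(m)=0$ and $T_mX=0$, then $X^k(m)=e_k^*(X(m))=0$ and, since $d_mX^k=0$, also $(a(Y)X^k)(m)=0$. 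Each summand therefore vanishes at $m$, giving $[X_N,Y](m)=0$ for every $N$. This is exactly the finite-dimensional computation, now carried out on the finite-rank approximants $X_N$ (which also satisfy $j^1_m X_N=0$).

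It remains to let $N\to\infty$. Because condition 2 makes $(\Gamma(E),\cmm)$ a Banach Lie algebra the bracket is continuous, and evaluation $Z\mapsto Z(m)$ is continuous, so provided $X_N\to X$ in $\Gamma(E)$ one concludes $[X,Y](m)=\lim_N[X_N,Y](m)=0$. The crux is precisely this convergence, and it is here that the Schauder hypothesis enters: the projections $P_N$ are uniformly bounded (their supremum, the basis constant, is finite by Banach--Steinhaus), so $P_N\circ X\to X$ pointwise under a uniform bound, with the first-order part governed by $T(P_N\circ X)=P_N\circ TX$. I expect the main obstacle to be upgrading this pointwise, uniformly bounded convergence to convergence in the first-order section topology for which the algebroid bracket is continuous; this is the step that fails for a merely total family and that the existence of a Schauder basis is designed to supply. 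In finite dimensions the difficulty evaporates because $P_N$ is eventually the identity, which is why no condition on the fiber is needed there.
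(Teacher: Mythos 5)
Your proposal follows essentially the same route as the paper's proof: reduce (by bilinearity and antisymmetry) to showing $[s,t](m)=0$ when $s\in j^1_m 0$, pass to a trivialization, expand $\tilde s=\sum_n s_n e_n$ in the Schauder basis with smooth coefficients $s_n=e_n^*\circ\tilde s$ satisfying $s_n(m)=0$ and $ds_n(m)=0$, apply the Leibniz rule termwise to get $[s_n e_n,\tilde t]=-a(\tilde t)s_n\cdot e_n+s_n[e_n,\tilde t]$, observe that each term vanishes at $m$, and pass to the limit using continuity of the bracket and of evaluation. The one point where you stop short --- establishing $X_N\to X$ in the section topology for which the bracket is continuous --- is precisely the step the paper dispatches without further argument: it invokes condition 2 (that $(\Gamma(E),\cmm)$ is a Banach Lie algebra, so the bracket is continuous) to write $[\tilde s,\tilde t]=\sum_{n=1}^\infty[s_n e_n,\tilde t]$, which is exactly the interchange you label ``provided $X_N\to X$ in $\Gamma(E)$.'' So your caution is warranted --- strong convergence of the basis projections $P_N$ is uniform only on compact subsets of $\E$, so norm convergence of $P_N\circ\tilde s$ in a $C^1$-type section norm is not automatic, and the paper supplies no additional idea to secure it --- but modulo that shared appeal to condition 2, your argument and the paper's coincide, and you should not expect a missing lemma in the paper that resolves the difficulty you identified.
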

\begin{proof}
Assume that $\cmm$ is a Lie algebroid bracket satisfying conditions 1. and 2. of Definition~\ref{def:alg}, but not condition 3. Explicitly it means that for some $m\in M$ there exist local sections $s,t\in \Gamma(E)$ such that $[s,t](m)\neq 0$ while $s\in j^1_m 0$.

Let us consider a trivialization of $E$ around the point $p$. In this trivialization, the section $s$ can be viewed as a map $\tilde s: \M\to\E$. Denote by $\{e_n\}_{n\in\N}$ a Schauder basis of $\E$. Decomposing the section $s$ in that basis we get
\[ \tilde s = \sum_{n=1}^\infty s_n e_n, \]
where $s_n:\M \to \R$ are smooth since they can be written as $e_n^*(\tilde s)$ for biorthogonal functionals $\{e_n^*\}_{n\in\N}$ related to the basis $\{e_n\}_{n\in\N}$, see e.g. \cite[Section 1.b]{lindenstrauss}. Moreover, from the assumed properties of $s$ it follows that $s_n(m) = 0$ and $ds_n(m) = 0$.

Using $2.$ from Definition~\ref{def:alg}, one gets
\[
[\tilde s, \tilde t] = \sum_{n=1}^\infty [s_n e_n, \tilde t] = - a(t)s_n \cdot e_n + s_n [e_n,\tilde t].
\]
Since $a(t)s_n= ds_n(a(t))$, we observe that evaluating this expression at the point $m$, both terms vanish and we get a contradiction.
\end{proof}

The existence of queer Banach Lie algebroids on bundles with fibers not admitting a Schauder basis is still an open problem. Note that the existence of Hamel basis is not sufficient for the given proof as in general coefficients $s_n$ would not be smooth.


\subsection{Correspondence between Poisson and Lie algebroid structures}

In the finite-dimensional setting there is an equivalence between linear Poisson structures and Lie algebroid structures on the dual bundle, see e.g. \cite{coste87groupoides,courant90,marle,mackenzie2005,deleon2010}. In the infinite-dimensional setting the situation is not that simple. 

Given a linear sub-Poisson structure on a certain subbundle $T^\flat E^*$ of the cotangent bundle $T^*E$ of a Banach vector bundle $E$, one shows that there exists a canonically defined Banach Lie algebroid structure on $E$. Moreover, given a Banach Lie algebroid structure on $E$ one obtains a sub-Poisson structure on $T^\flat E^*$. This result is an adaptation of the construction known from the finite-dimensional case. The proof in Banach context can be found in \cite[Theorem 4.8]{pelletier}. Note that the aforementioned proof silently assumes that Lie bracket and Poisson bracket are non-queer, i.e. it depends only on the first jet, which is clarified (and generalized to the convenient setting) in \cite[Theorem 7.1]{cabau-pelletier_book}. 

For this construction two families of functions are used. In the finite-dimensional case their differentials always span the whole fibers of the cotangent bundle. However, in the Banach context, they only span a certain subspace denoted as $T^\flat E^*$. In effect, the Poisson bracket in that setting was defined only for a certain class of smooth functions on $E^*$. It is a similar situation to sub-Riemannian geometry, where a metric is defined only for a certain class of tangent vectors. In this paper, we show that working with the predual bundle $E_*$ instead of $E^*$, it is possible to obtain a Poisson structure in the sense of \cite{OR}. More precisely, we demonstrate that assuming that there exists a predual vector bundle $E_*$, one has
\[
T^\flat E^*_{|E_*} = T^* E_*.
\]

Throughout the paper we will use the following notation.
Let $M$ be a smooth Banach manifold modeled on the Banach space $\M$. We will consider a Banach bundle $\pi:E\to M$ over $M$ with the typical fiber $\E$. There exists the associated dual Banach bundle $\pi^*:E^*\to M$ with the typical fiber $\E^*$. We will postulate the existence of a predual space $\E_*$ to the Banach space $\E$ and a predual bundle $\pi_*:E_*\to M$ to $E$. We will be interested in a non-reflexive case, i.e. the situation when $\E_*$ is not canonically isomorphic to $\E^*$. Note that $E_*$ might be non-unique.

\subsection{Local picture}
Let us introduce local coordinates in the considered Banach vector bundles: take any open set $U\subset M$ such that $E_U:=\pi^{-1}(U)$ is trivial, i.e.
\be E_U\cong U\times \E.\ee
Let us write down some natural explicit identifications for other related bundles, which will be useful in next sections. The dual and predual bundles can be locally identified with the following sets:
  \[ E^*_U\cong U\times \E^*\]
  \[ {E_*}_U\cong U\times \E_*\]
Similar identifications are also valid for tangent and cotangent bundles of $E$ and $E_*$:
  \[ TE_U \cong U\times \E \times \M \times \E \]
  \[ TE^*_U \cong U\times \E^* \times \M \times \E^*  \]
  \[ T{E_*}_U \cong U\times \E_* \times \M \times \E_*  \]
  
  \[ T^*E_U \cong U\times \E \times \M^* \times \E^* \]
  \[ T^*E^*_U \cong U\times \E^* \times \M^* \times \E^{**}  \]
  \[ T^*{E_*}_U \cong U\times \E_* \times \M^* \times \E  \]
Finally the jet bundle of $E$ can be identified with:
\[ \mathcal J^1(E_U) \cong U \times \E \times L(\M,\E),\]
where $L(\cdot,\cdot)$ stands for the Banach space of bounded linear maps.

In the paper, to simplify the notation, we will not write explicitly the trivialization charts in the formulas.


Let us state here a Banach version of a fact observed in the convenient setting in \cite[Notation 3.10]{cabau-pelletier_book}.

\begin{prop}\label{prop:lie_bracket_local}
Consider two sections $X,Y\in\Gamma(E_U)$, which we express in the trivialization $U\times \E$ as $X(m)=(m,v_m)$ and $Y(m)=(m,w_m)$. The local expression of a Lie bracket of a Banach Lie algebroid $E\to M$ is as follows:
\be \label{lie_bracket_local}[X, Y](m) = \big(a(v_m) Y\big)(m) - \big(a(w_m) X\big)(m) + C_m(v_m,w_m) \ee
for $m\in U\subset M$, where $U\ni m\mapsto C_m\in L_{skew}(\E, \E; \E)$ is a certain smooth field of bounded skew-symmetric bilinear maps.
\end{prop}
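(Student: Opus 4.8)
The plan is to reproduce, inside the trivialization $U\times\E$, the classical computation behind \cite[Proposition 2.2.2]{marle}, taking as the natural candidate for the tensorial term the bracket of constant sections. Concretely, for $v\in\E$ let $\bar v$ denote the section $m'\mapsto(m',v)$, which is constant in the chosen trivialization, and set
\be C_m(v,w):=[\bar v,\bar w](m). \ee
Since $\overline{\alpha v_1+v_2}=\alpha\bar v_1+\bar v_2$ and the algebroid bracket is $\R$-bilinear and skew-symmetric, $C_m$ is automatically bilinear and skew-symmetric in $(v,w)$. Boundedness of $C_m\in L_{skew}(\E,\E;\E)$ and smoothness of the field $m\mapsto C_m$ will then follow from condition~2 of Definition~\ref{def:alg} (continuity of the bracket on the Banach Lie algebra of sections, which by condition~3 factors through the Banach jet bundle $\mathcal J^1(E)$) together with the smoothness of the section $[\bar v,\bar w]$.

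The formula itself I would obtain by the standard two-step reduction using the Leibniz rule (condition~1). Writing $Y=\overline{w_m}+R$ with $R(m)=0$, bilinearity gives $[X,Y](m)=[X,\overline{w_m}](m)+[X,R](m)$; the crux is the auxiliary identity
\be \label{aux}[X,R](m)=T_mR\big(a(v_m)\big)\qquad\text{whenever }R(m)=0, \ee
whose right-hand side equals $(a(v_m)Y)(m)$ because $\overline{w_m}$ has vanishing derivative, so $T_mY=T_mR$. Applying \eqref{aux} once more after swapping the two arguments and using skew-symmetry converts $[X,\overline{w_m}](m)$ into $[\overline{v_m},\overline{w_m}](m)-(a(w_m)X)(m)=C_m(v_m,w_m)-(a(w_m)X)(m)$, which assembles into the asserted expression \eqref{lie_bracket_local}.

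To prove \eqref{aux} I would decompose $R$ in a Schauder basis $\{e_n\}$ of the typical fiber $\E$, writing $R=\sum_n R_n e_n$ with $R_n=e_n^*\circ R$ smooth and $R_n(m)=0$. Applying the Leibniz rule termwise to the partial sums $R^{(N)}=\sum_{n\le N}R_n e_n$ and evaluating at $m$, the zeroth-order contributions drop out because $R_n(m)=0$, leaving $[X,R^{(N)}](m)=\sum_{n\le N}dR_n(m)\big(a(v_m)\big)\,e_n=Q_N\big(T_mR(a(v_m))\big)$, where $Q_N$ is the canonical Schauder projection. As $Q_N y\to y$ for every fixed $y\in\E$, these right-hand sides converge to $T_mR(a(v_m))$, and \eqref{aux} follows once the left-hand sides are shown to converge to $[X,R](m)$.

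This last passage to the limit is where I expect the main obstacle, and it is the genuinely infinite-dimensional point. The bracket is continuous only with respect to the norm of the jet bundle, i.e.\ the operator norm on the $L(\M,\E)$-component, so identifying $\lim_N[X,R^{(N)}](m)$ with $[X,R](m)$ requires the first jets to converge there, namely $\norm{(\id-Q_N)\circ T_mR}\to0$ in $L(\M,\E)$. Since the projections $Q_N$ tend to the identity only strongly and not in operator norm, this holds automatically only when $T_mR$ is a compact (equivalently, approximable) operator; for a general bounded $T_mR$ it must be secured from an additional structural input, in the same spirit as the Schauder-basis hypothesis already needed to exclude queer algebroids. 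Controlling this convergence — by exploiting the fiberwise basis together with the first-jet (non-queer) dependence, and by restricting to the class of jets for which the interchange is legitimate — is the technical heart of the argument, whereas the bilinearity, skew-symmetry, boundedness and smoothness of $C_m$ are comparatively routine bookkeeping.
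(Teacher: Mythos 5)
The first thing to note is that the paper contains no proof of Proposition~\ref{prop:lie_bracket_local}: it is stated as a Banach version of a fact from the convenient setting (Notation~3.10 in the cited Cabau--Pelletier book), so your attempt must be judged on its own merits. Your skeleton is sound and natural: defining $C_m(v,w):=[\bar v,\bar w](m)$ on constant sections immediately gives bilinearity and skew-symmetry, and the reduction of \eqref{lie_bracket_local} to the auxiliary identity $[X,R](m)=T_mR\big(a(v_m)\big)$ for $R(m)=0$ is correct (the two applications, with skew-symmetry, assemble exactly as you say). Your Schauder-expansion attack on that identity is moreover the very technique the paper itself uses in Section~\ref{sec:basic} for the theorem excluding queer algebroids, with $R_n=e_n^*\circ R$ smooth and $R_n(m)=0$, and the computation $[X,R^{(N)}](m)=Q_N\big(T_mR(a(v_m))\big)$ is right.

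The gap is the one you flag yourself, and as the argument stands it is fatal rather than a technicality to be deferred. Identifying $\lim_N[X,R^{(N)}](m)$ with $[X,R](m)$ requires $R^{(N)}\to R$ in the topology through which the bracket acts, i.e.\ (by first-jet dependence) $\norm{(\id-Q_N)\circ T_mR}\to 0$ in $L(\M,\E)$; since $Q_N\to\id$ only strongly, this holds precisely when $T_mR$ is approximable (in particular compact), so you prove \eqref{lie_bracket_local} only for sections whose fiber derivative at $m$ is compact --- already $R(m')=A(m'-m)$ with $A$ bounded non-compact escapes the argument. Nor is this an artifact of the basis: the alternative ``tensoriality'' route --- show that the defect $D(X,Y)(m):=[X,Y](m)-(a(v_m)Y)(m)+(a(w_m)X)(m)$ is $C^\infty_{loc}(M)$-bilinear via the Leibniz rule and then use non-queerness --- kills, via functions with $f(m)=0$, exactly the rank-one jet perturbations $\mu\otimes w$, hence by additivity the finite-rank ones and by continuity their norm closure, and hits the same wall at non-approximable derivatives. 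So your diagnosis is accurate, but no mechanism in the proposal (nor, frankly, in the bare axioms of Definition~\ref{def:alg}) closes it; a complete proof needs an additional input such as an approximation-property hypothesis together with norm-continuity of the jet-induced bracket map $\mathcal J^1_m(E)\times\mathcal J^1_m(E)\to E_m$, which you invoke implicitly for both the limit and the boundedness of $C_m$ but never establish. Two smaller points: you import a Schauder basis on $\E$, a hypothesis the Proposition does not carry (in the paper it is assumed only for the queer-algebroid theorem); and the same unjustified interchange of bracket and infinite sum also shadows the paper's own proof of that theorem, where it is harmless only because every term is evaluated at the point where $s_n(m)=ds_n(m)=0$ --- in your auxiliary identity, by contrast, the limit carries the entire content.
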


\begin{rem}\label{rem:not_lie_algebra}
Note that the field of bilinear maps $m\mapsto C_m$ depends on the choice of trivialization. In general $C_m$ might not define a Lie bracket on $\E$ because it might not satisfy the Jacobi identity. However, if the Lie algebroid bracket preserves the family of sections constant in a chosen trivialization, $C_m$ would define a structure of Lie algebra on $\E$. This situation can happen e.g. if $E$ is a sheaf of Lie algebras, i.e. $a=0$, see \cite{marle}.
\end{rem}

\section{Poisson structure on predual to Lie algebroid}\label{sec:poisson}

\subsection{Poisson tensor}
There exist two distinguished families of local functions on the dual bundle $E^*$: one given as pull-backs of local functions on the base 
\[ f\circ \pi^* \] 
for $f\in C_{loc}^\infty(M)$ and the other defined by pairing with local sections of $E$: 
\[ \lambda_X(\rho)=\pair\rho{X_{\pi^*(\rho)}}\]
for $X\in \Gamma_{loc}(E)$. They are respectively fiber-wise constant and linear. Those families are useful for describing the relationship between the Lie algebroid structure on $E$ and the Poisson structure on $E^*$.

Using those functions, given a Banach Lie algebroid structure on $E$, a sub-Poisson structure on $E^*$ was constructed in \cite{pelletier} and some gaps in the construction were clarified later in \cite{cabau-pelletier_book}. It is an adaptation of the construction known from the finite-dimensional case, see e.g. \cite{coste87groupoides,courant90,grabowski97,marle,mackenzie2005,deleon2010}. 
In general there is an obstacle in the Banach case: the differentials of functions $f\circ \pi^*$ and $\lambda_X$ at some point $\rho$ do not span the whole dual fiber $T^*_\rho E^*$. Thus the Poisson bracket in \cite[Theorem 4.8]{pelletier} was defined only for a certain class of smooth functions on $E^*$. In this paper we explore the counterpart of this construction for $E_*$.

\begin{prop}\label{functions-span}
The differentials of functions $f\circ \pi_*$ and $\lambda_X$, for $f\in C_{loc}^\infty(M)$, $X\in \Gamma_{loc}(E)$ span the cotangent bundle $T^* E_*$ of the predual bundle $E_*$, i.e. for any $\rho\in E_*$ we have:
\be T^*_\rho E_* = \operatorname{span} \big(\{d(f\circ \pi_*)(\rho)\;|\;f\in C_{loc}^\infty(M)\}\cup \{
    d\lambda_X(\rho)\;|\; X\in \Gamma(E) \}\big).\ee
\end{prop}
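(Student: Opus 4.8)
The plan is to work entirely in a local trivialization and reduce the statement to two elementary spanning facts. Fix $\rho\in E_*$ lying over $m\in U$ and use the identification $(E_*)_U\cong U\times\E_*$ to write $\rho=(m,\xi)$ with $\xi\in\E_*$. Under the local identification $T^*(E_*)_U\cong U\times\E_*\times\M^*\times\E$ recorded above, the fiber $T^*_\rho E_*$ is $\M^*\times\E$, where the appearance of $\E$ (rather than $\E^{**}$) is exactly the consequence of $(\E_*)^*=\E$. A cotangent vector $(\alpha,v)\in\M^*\times\E$ pairs with a tangent vector $(\dot m,\dot\xi)\in\M\times\E_*$ by $\alpha(\dot m)+\pair{\dot\xi}{v}$. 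Identifying this fiber correctly is the conceptual crux of the argument; once it is in place the rest is a direct computation.

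Next I would compute the two families of differentials in these coordinates. For the base functions, $(f\circ\pi_*)(m,\xi)=f(m)$, whence $d(f\circ\pi_*)(\rho)=(df(m),0)$. For the fiber-wise linear functions, writing the section locally as a smooth map $s\colon U\to\E$ with $s(m)=v_m$, we have $\lambda_X(m,\xi)=\pair{\xi}{s(m)}$, and the product rule gives
\[ d\lambda_X(\rho)(\dot m,\dot\xi)=\pair{\dot\xi}{s(m)}+\pair{\xi}{Ds(m)\dot m}, \]
so that the $\E$-component of $d\lambda_X(\rho)$ is $s(m)\in\E$ and its $\M^*$-component is the functional $\dot m\mapsto\pair{\xi}{Ds(m)\dot m}$. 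To finish, I produce an explicit spanning set: choosing $X$ to be a section constant in the trivialization, $s\equiv v$, kills the derivative term and yields $d\lambda_X(\rho)=(0,v)$, so letting $v$ range over $\E$ gives all of $\{0\}\times\E$; and composing any $\alpha\in\M^*$ with the chart produces a local smooth $f$ with $df(m)=\alpha$, so $d(f\circ\pi_*)(\rho)=(\alpha,0)$ ranges over $\M^*\times\{0\}$. Together these exhaust $\M^*\times\E=T^*_\rho E_*$. All functions and sections involved are required to exist only locally, which is automatic inside the chart.

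The point worth emphasizing — and the reason the statement holds for $E_*$ but fails for $E^*$ — is that the fiber component of $T^*_\rho E_*$ is $(\E_*)^*=\E$, precisely where the values $s(m)$ of sections of $E$ live, so the $\lambda_X$ already fill the fiber direction completely. Over $E^*$ the corresponding fiber component is $\E^{**}$, and the same functions only reach the canonical image of $\E$ inside $\E^{**}$, a proper subspace in the non-reflexive case; this is the origin of the sub-Poisson phenomenon recalled earlier. Consequently I expect no genuine obstacle in the proof itself: the only mild point requiring care is that the differentials of base functions span $\M^*$, which reduces to the availability of continuous linear functionals on the model space and is supplied locally by the chart.
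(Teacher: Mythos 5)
Your proof is correct and takes essentially the same route as the paper's: work in a trivialization $T^*{E_*}_U \cong U\times\E_*\times\M^*\times\E$, compute $d(f\circ\pi_*)(\rho)=(df(m),0)$ and $d\lambda_X(\rho)=\big(d(\xi\circ s)(m),s(m)\big)$, and conclude that the two families together span $\M^*\times\E$. The only (harmless) variation is that you eliminate the $\M^*$-component of $d\lambda_X$ by choosing local sections constant in the trivialization, obtaining $\{0\}\times\E$ directly, whereas the paper leaves that component as an arbitrary subset $D\subset\M^*$ and absorbs it using the fact that the base functions already span all of $\M^*\times\{0\}$ --- both devices are valid and the conclusions agree.
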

\begin{proof}
Since we consider functions defined locally, it is enough to compute the differentials of $f\circ \pi_*$ and $\lambda_X$ in a trivialization. Recall that for brevity, we omit trivialization charts in the formulas. Consider $\rho=(m,\phi)\in U\times \E_*$ and $X(m)=(m,v_m)\in U\times \E$. By direct calculation, we obtain 
\be \label{df}d(f\circ \pi_*)(\rho)=(m,\phi, df(m), 0),\ee
\be \label{dlambda}d\lambda_X(\rho)=(m,\phi, d(\phi\circ v)(m) ,v_m),\ee
where we consider $\phi\circ v$ as a map $U\to \R$. In this manner its differential at $m$ is an element of $\M^*$.
The differentials of the functions of the first family span $\{m\}\times\{\phi\}\times \M^*\times \{0\}$ (see e.g. \cite[Corollary 4.2.14]{ratiu-mta3}), while the differentials of the functions of the second family span $\{m\}\times\{\phi\}\times D \times \E$ for some $D\subset M^*$. Thus both families of functions together span $\{m\}\times\{\phi\}\times \M^* \times \E\cong T^*_\rho E_*$.
\end{proof}

\begin{thm}\label{algebroid-poisson}
There exists a canonical linear Poisson bracket on $E_*$ related to the Banach Lie algebroid $E$ satisfying

\[\{f\circ \pi_*,g\circ \pi_*\}=0,\]
\be\{\lambda_X,f\circ \pi_*\}=(a(X)f)\circ \pi_*,\label{alg-bracket}\ee
\[\{\lambda_X,\lambda_Y\}=\lambda_{[X,Y]}.\]
\end{thm}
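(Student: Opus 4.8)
The plan is to produce the Poisson tensor by an explicit local formula, verify that it reproduces the three relations, and then invoke Proposition~\ref{functions-span} to promote the local construction to a canonical global object. Fix a trivialization $U\times\E_*$ and a point $\rho=(m,\phi)$; by the proof of Proposition~\ref{functions-span} the cotangent fiber is $T^*_\rho E_*\cong\M^*\times\E$, with the $\E$-component paired against the fiber direction $\E_*$ of $TE_*$ through the predual pairing. Guided by the finite-dimensional formula, I would set, for covectors $(\alpha_1,\xi_1),(\alpha_2,\xi_2)\in\M^*\times\E$,
\[ \Pi_\rho\big((\alpha_1,\xi_1),(\alpha_2,\xi_2)\big)=\langle\alpha_2,a(\xi_1)\rangle-\langle\alpha_1,a(\xi_2)\rangle+\pair{\phi}{C_m(\xi_1,\xi_2)}, \]
where $a=a_m:\E\to\M$ is the fiber anchor and $C_m$ the structure field of Proposition~\ref{prop:lie_bracket_local}. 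This expression is manifestly bounded and bilinear, smooth in $\rho$ since $m\mapsto C_m$ is smooth and the $\phi$-dependence is linear, and skew-symmetric because $C_m$ is skew and the two anchor terms exchange under swapping the arguments; hence it defines a section of $\bigwedge^2 T^{**}E_*$.

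Next I would evaluate $\Pi$ on the generating differentials \eqref{df} and \eqref{dlambda}. The relation $\{f\circ\pi_*,g\circ\pi_*\}=0$ is immediate since both covectors have vanishing $\E$-component, and $\{\lambda_X,f\circ\pi_*\}=\langle df(m),a(v_m)\rangle=(a(X)f)(m)$ drops out at once, giving \eqref{alg-bracket}. The substantive computation is the third relation: plugging in $d\lambda_X=(d(\phi\circ v)(m),v_m)$ and $d\lambda_Y=(d(\phi\circ w)(m),w_m)$ produces $\langle d(\phi\circ w)(m),a(v_m)\rangle-\langle d(\phi\circ v)(m),a(w_m)\rangle+\pair{\phi}{C_m(v_m,w_m)}$, which must be matched with $\pair{\phi}{[X,Y](m)}$ expanded through Proposition~\ref{prop:lie_bracket_local}. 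The crux is the identification $\langle d(\phi\circ w)(m),a(v_m)\rangle=\pair{\phi}{\big(a(v_m)Y\big)(m)}$: in the chosen trivialization $a(X)Y$ is the directional derivative of the section $w\colon U\to\E$ along $a(v_m)\in\M$, so this is just the chain rule applied to $\phi\circ w$. The two off-diagonal terms then reproduce the anchor terms of \eqref{lie_bracket_local} and the $C_m$-terms coincide, yielding $\{\lambda_X,\lambda_Y\}=\lambda_{[X,Y]}$.

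With the relations verified, Proposition~\ref{functions-span} does the conceptual work. Since the differentials of the generating functions span $T^*_\rho E_*$, any skew bivector reproducing the three relations is determined pointwise, which simultaneously gives uniqueness --- hence the word \emph{canonical} --- and shows that the local expressions agree on overlaps and glue to a well-defined global tensor, despite the trivialization-dependence of $C_m$ noted in Remark~\ref{rem:not_lie_algebra}. For the Jacobi identity I would use that the Jacobiator of a bivector is itself a tensor (the Schouten bracket $[\Pi,\Pi]$), so it suffices to check it on triples of generators. The triple $(\lambda_X,\lambda_Y,\lambda_Z)$ collapses to $\lambda$ of the algebroid Jacobiator of $\cmm$ and hence vanishes; the triple $(\lambda_X,\lambda_Y,f\circ\pi_*)$ reduces, after using the first two relations, to the anchor being a bracket morphism, $a([X,Y])=[a(X),a(Y)]$, which is a standard consequence of the Leibniz rule and Jacobi for the algebroid; the remaining triples vanish termwise.

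The main obstacle, and the reason the construction is carried out on the predual $E_*$ rather than on $E^*$, is the requirement $\sharp(T^*E_*)\subset TE_*$ from Definition~\ref{def:pm}. Computing $\sharp(\alpha,\xi)=\Pi_\rho(\,\cdot\,,(\alpha,\xi))$, the base component equals $-a(\xi)\in\M$ and is automatically an honest tangent vector, but the fiber component is the functional $\eta\mapsto\langle\alpha,a(\eta)\rangle+\pair{\phi}{C_m(\eta,\xi)}$, which a priori lies only in $\E^*$. I expect the heart of the matter to be showing that this functional in fact lies in the predual $\E_*$; this is exactly where the passage to $E_*$ pays off and where conditions on the anchor (that its dual maps $\M^*$ into $\E_*$) and on $C_m$ must be invoked, foreshadowing the necessary and sufficient conditions for $E_*$ to be a genuine Banach Poisson manifold.
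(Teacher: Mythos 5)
Your proposal is correct, and it reaches the theorem by a route organized differently from the paper's. The paper defines $\Pi$ abstractly on the generating differentials by the three relations, invokes non-queerness to see that $\lambda_{[X,Y]}(\rho)$ depends only on $d\lambda_X(\rho)$ and $d\lambda_Y(\rho)$ (so the definition is consistent), and then extends by linearity using Proposition~\ref{functions-span}. You instead posit the explicit trivialized formula
\[
\Pi_\rho\big((\alpha_1,\xi_1),(\alpha_2,\xi_2)\big)=\langle\alpha_2,a(\xi_1)\rangle-\langle\alpha_1,a(\xi_2)\rangle+\pair{\phi}{C_m(\xi_1,\xi_2)},
\]
which in the paper appears only downstream, as Proposition~\ref{prop:pb_formula} derived from Lemma~\ref{lem:sharp}; you verify the three relations on the differentials \eqref{df} and \eqref{dlambda} via Proposition~\ref{prop:lie_bracket_local}, and use the spanning statement for uniqueness and gluing. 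Each route buys something: linear extension from a spanning (not linearly independent) family of covectors requires a consistency check on linear relations among them, which the paper handles tersely via non-queerness, whereas your bounded bilinear formula settles well-definedness, boundedness and smoothness in one stroke; conversely, the paper's intrinsic definition never mentions the trivialization-dependent field $C_m$ of Remark~\ref{rem:not_lie_algebra}, so canonicity is immediate rather than an argument via uniqueness on overlaps, and your gluing step is genuinely needed for your construction to be global. Your Jacobi argument --- tensoriality of the Jacobiator $[\Pi,\Pi]$ checked on generator triples, with the mixed triple reducing to $a([X,Y])=[a(X),a(Y)]$, itself a consequence of the Leibniz rule and the Jacobi identity for $\cmm$ --- is the precise version of the paper's terser remark that Jacobi for functions of the form $\lambda_X+f\circ\pi_*$ together with dependence only on differentials suffices. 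One caveat: your closing paragraph on $\sharp(T^*E_*)\subset TE_*$ is not part of this theorem, which asserts only the Poisson bracket; the paper defers the sharp condition to Theorem~\ref{thm:pb}, and you correctly flag it as foreshadowing rather than a needed step, so nothing essential is missing.
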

\begin{proof}
The Poisson bracket of functions can be defined via the Poisson tensor $\Pi\in \Gamma( \bigwedge\limits^2 T^{**}E_*)$ by \eqref{pb_tensor}. Following \cite{pelletier}, we define the tensor $\Pi$ on differentials of $f\circ \pi_*$ and $\lambda_X$ as:
\[\Pi(d(f\circ \pi_*),d(g\circ \pi_*))=0,\]
\[\Pi(d\lambda_X,d(f\circ \pi_*))=(a(X)f)\circ \pi_*,\]
\[\Pi(d\lambda_X,d\lambda_Y)=\lambda_{[X,Y]}.\]
Observe that, since we excluded queer algebroids in Definition~\ref{def:alg}, the value of $\lambda_{[X,Y]}$ depends only on the differentials $d\lambda_X$, $d\lambda_Y$. Similarly, one has $(a(X)f)\circ \pi_* = \pair{df\circ \pi_*}{a(X)}$.
Thus the definition of $\Pi$ is correct and by Proposition~\ref{functions-span} $\Pi$ extends via linearity to $T^*_\rho E_*$. The only thing left to check is the Jacobi identity. 
The proof of this fact is analogous to the one in \cite{pelletier}. Namely, the relations \eqref{alg-bracket} imply that the Poisson bracket satisfies the Jacobi identity for functions of the form $\lambda_X + f\circ\pi_*$. Since the differentials of those functions span $T^*E_*$ and the Poisson bracket depends only on the differentials, we conclude that the Jacobi identity holds for all functions.
\end{proof}
  
Let us observe that this bracket is a linear localizable non-queer Poisson bracket in the sense of \cite{BGT} (i.e. it depends only on first derivatives). However, in order to have a structure of the Banach Poisson manifold according to the Definition~\ref{def:pm}, the condition \eqref{sharp} needs to be checked. This condition guarantees the existence of Hamiltonian vector fields for any function $H\in C^\infty(E_*)$. In the case of the Poisson bracket defined by \eqref{alg-bracket}, we can explicitly compute the map $\sharp$ in a trivialization $E_U\cong U\times\E$.

\begin{lem}\label{lem:sharp}
The sharp map $\sharp: T^*E_*\to T^{**}E_*$ for the Poisson bracket defined by \eqref{alg-bracket} has the following expression in trivialization:
\[\sharp(m,\phi,\mu,x) = \big(m,\phi,-a(x),a^*(\mu)-(\ad^m_{x})^*\phi\big)\]
for $(m,\phi,\mu,x)\in U\times \E_*\times \M^*\times \E$, where by $a^*$ we denote the dual map $a^*: T^*M\to E^*$ to the anchor and by $(\ad^m_x)^*$ we denote the dual map $(\ad^m_x)^*:\E^*\to\E^*$ to the map $\ad^m_x:\E\to\E$ given by $\ad^m_x(y):=C_m(x,y)$ for $x,y\in \E$ (see Proposition~\ref{prop:lie_bracket_local}). 
\end{lem}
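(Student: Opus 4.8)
The plan is to pin down $\sharp\alpha$ for an arbitrary covector $\alpha=(m,\phi,\mu,x)\in T^*_\rho E_*$ (at the base point $\rho=(m,\phi)$) from the defining relation \eqref{sharp_def}, rewritten as $\pair{\sharp\alpha}{\nu}=\Pi(\nu,\alpha)$ for all $\nu\in T^*_\rho E_*$. Since in the trivialization an element of $T^{**}_\rho E_*\cong\M^{**}\times\E^*$ is completely determined by its pairing against the fibre $\M^*\times\E$ of $T^*_\rho E_*$, and since by Proposition~\ref{functions-span} that fibre is spanned by the differentials $d(g\circ\pi_*)(\rho)$ and $d\lambda_Y(\rho)$, it suffices to evaluate the two numbers $\Pi(d(g\circ\pi_*),\alpha)$ and $\Pi(d\lambda_Y,\alpha)$ and to read off the $\M^{**}$- and $\E^*$-components from them.

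First I would express $\alpha$ itself through the spanning differentials. Using the explicit formulas \eqref{df} and \eqref{dlambda}, I choose a local section $X$ with $X(m)=(m,x)$ and a function $f\in C^\infty_{loc}(M)$ with $df(m)=\mu-d(\phi\circ v)(m)$; then $\alpha=d\lambda_X(\rho)+d(f\circ\pi_*)(\rho)$, so by bilinearity of $\Pi$ every evaluation reduces to the defining relations \eqref{alg-bracket}. Pairing with $\nu=d(g\circ\pi_*)$ and using skew-symmetry gives $\Pi(d(g\circ\pi_*),\alpha)=-(a(X)g)(m)=-\pair{dg(m)}{a(x)}$; as $g$ varies, $dg(m)$ exhausts $\M^*$, so this identifies the third component of $\sharp\alpha$ as evaluation against $-a(x)$, i.e. the element $-a(x)\in\M\subset\M^{**}$.

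For the $\E^*$-component I would pair with $\nu=d\lambda_Y$, where $Y(m)=(m,w)$. Expanding $\Pi(d\lambda_Y,\alpha)=\Pi(d\lambda_Y,d\lambda_X)+\Pi(d\lambda_Y,d(f\circ\pi_*))=\pair{\phi}{[Y,X](m)}+\pair{df(m)}{a(w)}$ and substituting the local bracket formula \eqref{lie_bracket_local} together with $df(m)=\mu-d(\phi\circ v)(m)$, the two first-order terms cancel: the term $\pair{\phi}{(a(w)X)(m)}$ coming from $[Y,X](m)$ is exactly matched by the one hidden in $\pair{d(\phi\circ v)(m)}{a(w)}$. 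What survives is $-\pair{\phi}{(a(x)Y)(m)}+\pair{\mu}{a(w)}+\pair{\phi}{C_m(w,x)}$. The first summand here is precisely the contribution of the already-known third component $-a(x)$ paired against the $\M^*$-part $d(\phi\circ w)(m)$ of $d\lambda_Y$; subtracting it and using the definitions $\pair{a^*(\mu)}{w}=\pair{\mu}{a(w)}$ and $\pair{(\ad^m_x)^*\phi}{w}=\pair{\phi}{C_m(x,w)}=-\pair{\phi}{C_m(w,x)}$ identifies the remaining $\E^*$-component as $a^*(\mu)-(\ad^m_x)^*\phi$. Since $w=Y(m)$ ranges over all of $\E$, this determines the component completely and yields the claimed formula.

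I expect the cancellation of the first-order terms in the last step to be the main point to get right: both $d\lambda_X$ (through the $m$-dependence of $\phi\circ v$) and the anchor-derivative terms in \eqref{lie_bracket_local} produce derivatives of the section coefficients, and it is exactly their mutual cancellation that makes $\sharp\alpha$ depend only on the pointwise data $(m,\phi,\mu,x)$ and the algebraic objects $a$ and $C_m$, in accordance with the non-queer (first-jet) nature of the bracket. The rest is bookkeeping: keeping the canonical pairings $\M^{**}\times\M^*$ and $\E^*\times\E$ straight, and noting that the outcome is independent of the auxiliary choices of $X$ and $f$, which is guaranteed by the well-definedness of $\Pi$ established in Theorem~\ref{algebroid-poisson}.
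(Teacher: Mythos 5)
Your proposal is correct and follows essentially the same route as the paper: it determines $\sharp\alpha$ from \eqref{sharp_def} by decomposing a general covector as $d\lambda_X + d(f\circ\pi_*)$ via Proposition~\ref{functions-span} and evaluating against the two spanning families using \eqref{alg-bracket} and \eqref{lie_bracket_local}. The only difference is one of detail, not of method: where the paper compresses the computation into ``by algebraic manipulations one obtains,'' you carry out those manipulations explicitly, in particular the cancellation of the first-order terms between $d(\phi\circ v)(m)$ and the anchor-derivative part of the bracket, which is indeed the step that makes $\sharp$ depend only on the pointwise data.
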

\begin{proof}
By direct computation with \eqref{sharp_def} and formulas obtained in the proof of Proposition~\ref{functions-span}, the formulas \eqref{alg-bracket} are equivalent to the following equalities:
\[\sharp(m,\phi,df(m),0)(m,\phi,dg(m),0) = \{g\circ\pi_*,f\circ\pi_*\}(m,\phi)=0\]
\[\sharp(m,\phi,df(m),0)(m,\phi,d(\phi\circ v)(m),v_m) = \{\lambda_v,f\circ\pi_*\}(m,\phi)=(a(v)f)(m)\]
\[\sharp(m,\phi,d(\phi\circ v)(m),v_m)(m,\phi,d(\phi\circ w)(m),w_m) = \{\lambda_w,\lambda_v\}(m,\phi)=\lambda_{[w,v]}(m,\phi)\]
Now, by algebraic manipulations one obtains the following equalities
\[\sharp(m,\phi,df(m),0) = \big(m,\phi,0,a^*(df(m))\big)\]
\[\sharp(m,\phi,d(\phi\circ v)(m),v_m) = \big(m,\phi,-a(v_m),a^*(d(\phi\circ v)(m))-(\ad^m_{v_m})^*\phi\big),\]
and the postulated formula follows.
\end{proof}

Note that in general, the map $\ad^m_x$ is not the adjoint representation for any Lie algebra, see Remark~\ref{rem:not_lie_algebra}.

\begin{rem}
An arbitrary function $f\in C_{loc}^\infty(E_*)$
in trivialization can be seen as a function on $U\times \E_*\subset \M\times \E_*$. 
Its differential $df\in\Gamma(T^*E_*)\cong \Gamma(U\times \E_*\times \M^* \times E$ can be decomposed with respect to the direct sum as
\[ df(m,\phi) = \frac{\partial f}{\partial m} + \frac{\partial f}{\partial \phi}\in \M^* \times \E,\]
where by $\tfrac{\partial f}{\partial m}$ (resp. $\tfrac{\partial f}{\partial \phi}$) we mean a partial derivative in the direction of the Banach subspace $\M$ (resp. $\E$).

\end{rem}
\begin{prop}\label{prop:pb_formula}
For two functions $f,g\in C^\infty(E_*)$ the Poisson bracket in trivialization assumes the form
\[ \{f,g\}(m,\phi) = \]
\be \label{alg-bracket-gen} 
= \pair{a(\tfrac{\partial f}{\partial \phi})}{\tfrac{\partial g}{\partial m}}-\pair{a(\tfrac{\partial g}{\partial \phi})}{\tfrac{\partial f}{\partial m}} +\pair{C_m(\tfrac{\partial f}{\partial \phi},\tfrac{\partial g}{\partial \phi})}{\phi}\ee
\end{prop}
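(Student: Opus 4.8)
The plan is to evaluate $\{f,g\}=\Pi(df,dg)$ directly, reading off the tensor $\Pi$ from the explicit sharp map of Lemma~\ref{lem:sharp}. Since by Proposition~\ref{functions-span} the differentials of $f\circ\pi_*$ and $\lambda_X$ span all of $T^*E_*$, the tensor $\Pi$ is defined on the entire cotangent bundle and Lemma~\ref{lem:sharp} applies to the differential of an \emph{arbitrary} smooth function, not only to the two distinguished families. First I would rewrite the definition \eqref{sharp_def} of $\sharp$ as $\{f,g\}=\Pi(df,dg)=(\sharp\, dg)(df)$, reducing the claim to evaluating the cotangent vector $df$ on the tangent vector $\sharp\, dg$. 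In the trivialization I write $df(m,\phi)=(m,\phi,\tfrac{\partial f}{\partial m},\tfrac{\partial f}{\partial\phi})$ with $\tfrac{\partial f}{\partial m}\in\M^*$ and $\tfrac{\partial f}{\partial\phi}\in\E$, and similarly for $g$, so that under the identifications $T^*_\rho E_*\cong\M^*\times\E$ and $T_\rho E_*\cong\M\times\E_*$ the evaluation splits into the $\langle\M^*,\M\rangle$-duality plus the $\langle\E,\E_*\rangle$-duality.

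Next I would substitute
\[\sharp\, dg=\Big(m,\phi,-a(\tfrac{\partial g}{\partial\phi}),\,a^*(\tfrac{\partial g}{\partial m})-(\ad^m_{\partial g/\partial\phi})^*\phi\Big)\]
and pair its $\M$- and $\E_*$-components against $\tfrac{\partial f}{\partial m}$ and $\tfrac{\partial f}{\partial\phi}$. The $\M$-component yields $-\pair{a(\tfrac{\partial g}{\partial\phi})}{\tfrac{\partial f}{\partial m}}$, the second term of \eqref{alg-bracket-gen}. The $\E_*$-component splits into two pieces: using the defining relation of the transpose anchor, $\pair{a(x)}{\mu}=\pair{x}{a^*(\mu)}$, the $a^*$-piece becomes $\pair{a(\tfrac{\partial f}{\partial\phi})}{\tfrac{\partial g}{\partial m}}$, the first term; and using $\pair{y}{(\ad^m_x)^*\phi}=\pair{C_m(x,y)}{\phi}$ together with the skew-symmetry of $C_m$ established in Proposition~\ref{prop:lie_bracket_local}, the remaining piece becomes $+\pair{C_m(\tfrac{\partial f}{\partial\phi},\tfrac{\partial g}{\partial\phi})}{\phi}$, the third term. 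Collecting the three contributions reproduces \eqref{alg-bracket-gen} exactly.

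The only genuine difficulties here are bookkeeping rather than analytic, and that is where I would be most careful: keeping the two dualities $\langle\M^*,\M\rangle$ and $\langle\E,\E_*\rangle$ separate and oriented consistently with the convention of $\pair{\,\cdot\,}{\,\cdot\,}$; tracking the sign introduced by the $-a(\tfrac{\partial g}{\partial\phi})$ in the base direction; and transposing correctly through $a^*$ and $(\ad^m_{\cdot})^*$, where the single use of skew-symmetry of $C_m$ converts $C_m(\tfrac{\partial g}{\partial\phi},\tfrac{\partial f}{\partial\phi})$ into $C_m(\tfrac{\partial f}{\partial\phi},\tfrac{\partial g}{\partial\phi})$ and so fixes the final sign. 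One should also note that the $\E_*$-component of $\sharp\, dg$ is genuinely an element of the predual (this is exactly the content of condition \eqref{sharp}), so that the $\langle\E,\E_*\rangle$-pairing is legitimate; granting this, the whole computation is purely algebraic once Lemma~\ref{lem:sharp} and Proposition~\ref{functions-span} are in hand.
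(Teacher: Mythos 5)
Your proposal is correct and takes essentially the same route as the paper's proof: both evaluate $\{f,g\}=(\sharp\,dg)(df)$ via Lemma~\ref{lem:sharp} in the trivialization and then move $a^*$ and $(\ad^m_x)^*$ to the other side of the pairing, with one application of the skew-symmetry of $C_m$ fixing the sign of the third term. One small correction to your closing remark: condition \eqref{sharp} is neither needed nor available at this point (it is only characterized afterwards, in Theorem~\ref{thm:pb}); a priori $\sharp\,dg$ lies in $T^{**}E_*\cong \M^{**}\times\E^*$ in the trivialization, and the computation goes through regardless because $\tfrac{\partial f}{\partial\phi}\in\E$ pairs canonically with the $\E^*$-component --- no predual membership of $\sharp\,dg$ is required for the formula \eqref{alg-bracket-gen} to hold.
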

\begin{proof}
Using the formulas \eqref{pb_tensor}, \eqref{sharp_def} and Lemma~\ref{lem:sharp} we conclude
\[ \{f,g\}(m,\phi) = -\pair{a(\tfrac{\partial g}{\partial \phi})}{\tfrac{\partial f}{\partial m}} + \]
\[ + \pair{a^*(\tfrac{\partial g}{\partial m})-\ad^*_{\tfrac{\partial g}{\partial \phi}}\phi}{\tfrac{\partial f}{\partial \phi}}.
\]
Moving the dual maps to the other argument and substituting the definition of $\ad^m_x$, we obtain the claimed formula.
\end{proof}

\begin{thm}\label{thm:pb}
The predual bundle $E_*$ to the Lie algebroid $E$ equipped with the canonical Poisson structure \eqref{alg-bracket-gen} is a Banach Poisson manifold in the sense of Definition~\ref{def:pm} if and only if the following conditions hold
\be a^*(T^*M)\subset E_*,\ee
\be(\ad^m_x)^*(\E_*)\subset \E_*,\ee
\end{thm}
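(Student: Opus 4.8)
The plan is to recognize that this theorem is entirely about the condition \eqref{sharp}, namely $\sharp(T^*E_*)\subset TE_*$. Since all the structural properties of the bracket \eqref{alg-bracket-gen}---bilinearity, skew-symmetry, the Leibniz rule, and the Jacobi identity---have already been established in Theorem~\ref{algebroid-poisson}, the only axiom from Definition~\ref{def:pm} that remains to be verified is precisely that the sharp map takes values in the predual $TE_*$ rather than merely in the double dual $T^{**}E_*$. Thus the whole proof reduces to reading off, from the explicit formula for $\sharp$ in Lemma~\ref{lem:sharp}, exactly when this image lands in $TE_*$, and translating that into the two stated inclusions.

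First I would recall the local identifications $T^*E_{*|U}\cong U\times\E_*\times\M^*\times\E$ and $T^{**}E_{*|U}\cong U\times\E_*\times\M^*\times\E^*$ established in the local picture, together with the embedding $TE_{*|U}\cong U\times\E_*\times\M\times\E_*$ sitting inside $T^{**}E_{*|U}$ via the canonical inclusions $\M\hookrightarrow\M^{**}$ and $\E_*\hookrightarrow\E^*{}^*$. Lemma~\ref{lem:sharp} gives
\[
\sharp(m,\phi,\mu,x)=\big(m,\phi,-a(x),\,a^*(\mu)-(\ad^m_x)^*\phi\big),
\]
so the fibre component of $\sharp$ in the $\M$-direction is $-a(x)$, which automatically lies in $\M$ (the anchor $a$ maps into $TM$), and thus needs no condition. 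The component in the $\E^*$-direction is $a^*(\mu)-(\ad^m_x)^*\phi$, and the requirement $\sharp(T^*E_*)\subset TE_*$ is exactly the demand that this element lie in the predual $\E_*\subset\E^*$ for every choice of $\mu\in\M^*$, $x\in\E$, and $\phi\in\E_*$.

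Next I would split this single requirement into its two natural pieces. Taking $\phi=0$ and letting $\mu$ range over $\M^*$ forces $a^*(\mu)\in\E_*$ for all $\mu$, which is precisely the first condition $a^*(T^*M)\subset E_*$; in trivialized form $a^*\colon\M^*\to\E^*$ must land in $\E_*$. Taking $\mu=0$ and letting $x$ and $\phi$ vary forces $(\ad^m_x)^*\phi\in\E_*$ for all $x\in\E$, $\phi\in\E_*$, which is the second condition $(\ad^m_x)^*(\E_*)\subset\E_*$. Conversely, if both inclusions hold then every value $a^*(\mu)-(\ad^m_x)^*\phi$ is a difference of two elements of $\E_*$, hence lies in $\E_*$, and so $\sharp$ maps into $TE_*$; this gives the ``if'' direction. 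For the ``only if'' direction the two specializations above already exhibit the necessity of each condition separately.

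The main subtlety---and the step I would treat most carefully---is checking that these pointwise/fibrewise conditions are genuinely equivalent to the global bundle inclusion $\sharp(T^*E_*)\subset TE_*$, and that they are independent of the trivialization. One must verify that the conditions $a^*(T^*M)\subset E_*$ and $(\ad^m_x)^*(\E_*)\subset\E_*$, once they hold in one trivialization, continue to characterize the same geometric property in any other, using that $a$ is a genuine bundle morphism and that the field $m\mapsto C_m$ transforms consistently (cf. Remark~\ref{rem:not_lie_algebra}); one should also confirm that the first condition, although stated for the anchor alone, is chart-independent because $a^*\colon T^*M\to E^*$ is defined invariantly. Once the invariance of the two conditions is settled, the equivalence with \eqref{sharp} is immediate from the explicit formula, and since all other Poisson axioms are already in hand from Theorem~\ref{algebroid-poisson}, the proof is complete.
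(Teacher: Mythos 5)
Your proposal is correct and follows essentially the same route as the paper's proof: reduce the theorem to the single condition \eqref{sharp} (everything else being settled by Theorem~\ref{algebroid-poisson}), read off from the formula in Lemma~\ref{lem:sharp} that $-a(x)\in\M$ is automatic, and observe that $a^*(\mu)-(\ad^m_x)^*\phi\in\E_*$ for all $\mu,x,\phi$ is equivalent to the two stated inclusions, which you make explicit by the specializations $\phi=0$ and $\mu=0$. Your added concern about trivialization-independence is a reasonable point of care that the paper leaves implicit, but it does not change the argument.
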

\begin{proof}
It follows directly from Lemma~\ref{lem:sharp}. In general $\sharp(m,\phi,\mu,x)$ is an element of $U\times \E_*\times \M^{**}\times \E^*$. In order for condition \eqref{sharp} of Definition~\ref{def:pm} to hold, it needs to belong to $U\times \E_*\times \M\times \E_*$.  Thus one needs
\[ a(x) \in \M, \]
\[ a^*(\mu)-(\ad^m_{x})^*\phi \in \E_*. \]
The first of those conditions is automatic by the definition of anchor map, the other is equivalent to the given conditions.
\end{proof}

There are two extreme examples of Lie algebroids. One is a Lie algebra with $M$ being a singleton set. The other is the tangent bundle with the anchor being identity and the Lie bracket given as the commutator of vector fields. Let us briefly comment on both situations.

In the case where $E=\E$ is a Banach Lie algebra, the map $\ad^m_x$ is the usual adjoint representation $\ad_x = [x,\cdot]$ and this theorem reduces to Theorem 4.2 in \cite{OR}. Since the anchor map is zero in that case, only the second condition of Theorem~\ref{thm:pb} survives and assumes the form
\[\ad^*_x \E_*\subset \E_*\]
for $x\in \E$. The resulting Banach Poisson manifold is called a Banach Lie--Poisson space with bracket assuming the usual form
\[ \{f,g\}(\phi) = \pair{[df(\phi),dg(\phi)]}{\phi}. \]

Another typical example can be obtained when we take as $E$ a tangent bundle $TM$ of a Banach manifold $M$. In this case, there is a canonical Banach Lie algebroid structure defined by taking $\cmm$ as the commutator of vector fields and $a=\id$. Assuming that the precotangent bundle $T_*M$ exists, from Proposition~\ref{algebroid-poisson} we obtain a Poisson bracket on the bundle $T_*M$ related to the canonical weak symplectic structure on $T^*M$. We will explore this situation in detail in Section~\ref{sec:weak_symplectic}.

\section{Banach Lie algebroid structure on dual of linear Poisson manifold}\label{sec:alg}

We will address now the opposite construction. Starting from a linear localizable non-queer Poisson bracket on the bundle $E_*$, we will describe the construction of a Banach Lie algebroid structure on $E$.

We begin with the following lemma, which is just a straightforward adaptation of \cite[Lemma 4.10]{pelletier}. The proof is literally the same and uses the Leibniz rule for Poisson bracket.
\begin{lem}\label{lem:pblin}
For $f,g\in C^\infty_{loc}(M)$ we have $\{f\circ\pi_*, g\circ\pi_*\} = 0$. Moreover let $X\in\Gamma_{loc}(E)$ be a local section of $E$. Then $\{f\circ\pi_*,\lambda_X\}$ is fiber-wise constant, i.e. equal to $g\circ\pi_*$ for some $g\in C^\infty_{loc}(M)$.
\end{lem}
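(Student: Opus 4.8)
The plan is to prove the two assertions in order, deriving the first (vanishing of the bracket of two base pull-backs) from the second, and to use as the only structural input the \emph{linearity} hypothesis on the bracket together with the Leibniz (derivation) property. The essential auxiliary tool will be the fiber-wise scaling $\mu_t(m,\phi)=(m,t\phi)$ on $E_*$: a local function $h$ is fiber-wise constant exactly when $h\circ\mu_t=h$, and fiber-wise linear exactly when $h\circ\mu_t=t\,h$, for all $t$. I will also use that $\lambda_{fX}=(f\circ\pi_*)\,\lambda_X$ for $f\in C^\infty_{loc}(M)$ and $X\in\Gamma_{loc}(E)$, so that multiplying a fiber-wise linear function by a pull-back keeps it inside $\mathcal L_{loc}(E_*)$, and the nondegeneracy of the duality pairing $\E_*\times\E\to\R$ coming from $\E=(\E_*)^*$, which guarantees that for $\phi\neq0$ there is a (constant, in a trivialization) section $Y$ with $\pair{\phi}{Y}\neq0$.

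First I would treat the mixed bracket. Fix $X\in\Gamma_{loc}(E)$ and $f\in C^\infty_{loc}(M)$, and take an arbitrary section $Y$. Applying the Leibniz rule in the first slot gives
\[
\{\lambda_{fY},\lambda_X\}=(f\circ\pi_*)\,\{\lambda_Y,\lambda_X\}+\lambda_Y\,\{f\circ\pi_*,\lambda_X\}.
\]
By the linearity hypothesis both $\{\lambda_{fY},\lambda_X\}$ and $\{\lambda_Y,\lambda_X\}$ are fiber-wise linear, and $(f\circ\pi_*)\{\lambda_Y,\lambda_X\}$ is then fiber-wise linear as well (pull-back times linear). Consequently $\lambda_Y\cdot\{f\circ\pi_*,\lambda_X\}$ is fiber-wise linear for \emph{every} section $Y$. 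Writing $G:=\{f\circ\pi_*,\lambda_X\}$ and comparing both sides under $\mu_t$ in a trivialization, for $\phi\neq0$ choose $Y$ with $\pair{\phi}{Y}\neq0$; linearity of $\lambda_Y\,G$ forces $\pair{\phi}{Y}\,G(m,t\phi)=t\,\pair{\phi}{Y}\,G(m,\phi)\cdot t^{-1}$ after cancelling the scaling of $\lambda_Y$, i.e. $G(m,t\phi)=G(m,\phi)$ for all $t\neq0$. Smoothness (hence continuity) of $G$ then yields $G(m,\phi)=G(m,0)$, so $G$ is fiber-wise constant and equals $g\circ\pi_*$ for some $g\in C^\infty_{loc}(M)$. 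This is the second assertion.

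Next I would deduce the first assertion. With $g\in C^\infty_{loc}(M)$ fixed and $X$ arbitrary, the second assertion already shows $\{\lambda_{fX},g\circ\pi_*\}$ is fiber-wise constant. Expanding by Leibniz,
\[
\{\lambda_{fX},g\circ\pi_*\}=(f\circ\pi_*)\,\{\lambda_X,g\circ\pi_*\}+\lambda_X\,\{f\circ\pi_*,g\circ\pi_*\},
\]
the first term on the right is fiber-wise constant (again by the second assertion), so $\lambda_X\cdot H$ is fiber-wise constant for $H:=\{f\circ\pi_*,g\circ\pi_*\}$ and every $X$. Evaluating on the zero section, where $\lambda_X$ vanishes, shows that this fiber-wise constant is identically $0$; hence $\lambda_X(m,\phi)\,H(m,\phi)=0$ for all $\phi$, and choosing $X$ with $\lambda_X(m,\phi)\neq0$ for $\phi\neq0$ gives $H=0$ off the zero section and, by continuity, everywhere.

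The routine bilinear/Leibniz bookkeeping is harmless; the step I expect to be the genuine obstacle is the \textbf{cancellation (division) argument} in the Banach setting, namely that $\lambda_Y\cdot G$ being fiber-wise linear for all $Y$ forces $G$ to be fiber-wise constant. Unlike the finite-dimensional case, this cannot be read off by choosing a basis of linear functions; it relies on (i) the nondegeneracy of the predual pairing to produce, pointwise, a section $Y$ with $\lambda_Y(m,\phi)\neq0$, (ii) the existence of enough local sections to realize arbitrary fiber vectors in a trivialization, and (iii) the homogeneity-plus-continuity reduction $G(m,t\phi)=G(m,\phi)\Rightarrow G(m,\cdot)=\mathrm{const}$. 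These are exactly the places where one must invoke that $\E=(\E_*)^*$ and the localizability of the bracket, and where the argument differs in spirit from the naive finite-dimensional version of \cite[Lemma 4.10]{pelletier}.
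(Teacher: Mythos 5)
Your proof is correct and follows essentially the same route as the paper, which simply defers to \cite[Lemma 4.10]{pelletier}: the Leibniz rule applied to $\lambda_{fY}=(f\circ\pi_*)\lambda_Y$ to isolate $\lambda_Y\{f\circ\pi_*,\lambda_X\}$, a fiber-scaling/homogeneity cancellation, and vanishing on the zero section. Your write-up just supplies the details the paper leaves to the citation (one harmless slip: $h\circ\mu_t=t\,h$ characterizes homogeneity, not linearity, but you only use the implication linear $\Rightarrow$ homogeneous, so nothing breaks).
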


We define the Lie algebroid structure on $E$ by inverting the procedure described in the previous section. First, we observe the following
\begin{lem}\label{lem:linPB}
The mapping $\lambda:\Gamma_{loc}(E)\ni X \mapsto \lambda_X\in\mathcal L_{loc}(E_*)$ is a bijection.
\end{lem}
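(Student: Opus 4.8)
The plan is to read off both directions of the bijection from the defining property of the predual, namely that fiberwise $\E=(\E_*)^*$ with a non-degenerate pairing $\pair{\cdot}{\cdot}$, and to reduce everything to a single trivialization since $\lambda$ preserves domains. A local section $X$ over $U$ produces the function $\lambda_X$ over $\pi_*^{-1}(U)$, and in the trivialization $\pi_*^{-1}(U)\cong U\times\E_*$ with $X(m)=(m,v_m)$ one has the explicit form $\lambda_X(m,\phi)=\pair{\phi}{v_m}$. I would prove injectivity and surjectivity separately against this local model.

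For injectivity, suppose $\lambda_X=\lambda_Y$ with $X(m)=(m,v_m)$ and $Y(m)=(m,w_m)$. Then $\pair{\phi}{v_m-w_m}=0$ for every $\phi\in\E_*$ and every $m\in U$. Because $\E=(\E_*)^*$, an element of $\E$ is precisely a bounded functional on $\E_*$, so non-degeneracy of the pairing forces $v_m=w_m$ for all $m$, i.e. $X=Y$. This direction uses nothing beyond the predual property.

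For surjectivity, I would start from a locally defined fiberwise linear function $\ell\in\mathcal L_{loc}(E_*)$ and reconstruct its section. Restricting $\ell$ to a single fiber $(E_*)_m\cong\E_*$ gives a smooth linear map $\E_*\to\R$; being smooth it is continuous, hence a bounded linear functional, so it defines an element $v_m\in(\E_*)^*=\E$. This yields a candidate $X\colon m\mapsto(m,v_m)$ satisfying $\lambda_X=\ell$ by construction, and the only thing left is to check that $X$ is a genuine smooth section.

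This smoothness is the one real obstacle, and I expect it to be settled by differentiating $\ell$ rather than merely restricting it. Comparing with \eqref{dlambda}, the fiber-direction component of $d\lambda_X$ in the trivialization $T^*{E_*}_U\cong U\times\E_*\times\M^*\times\E$ is exactly the last factor $v_m\in\E$. Applying the same decomposition to $d\ell$, the $\E$-component of $d\ell(m,\phi)$ is the partial derivative $\tfrac{\partial\ell}{\partial\phi}$, which is independent of $\phi$ by fiberwise linearity and coincides with $v_m$; since $\ell$ is smooth, $d\ell$ is smooth, and therefore $m\mapsto v_m$ is a smooth map $U\to\E=(\E_*)^*$. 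Hence $X\in\Gamma_{loc}(E)$, so $\lambda$ is onto and the map is a bijection. I would also note in passing that this construction is manifestly compatible with restriction to smaller open sets, so $\lambda$ is in fact an isomorphism of the corresponding sheaves, although bijectivity is all that is asserted.
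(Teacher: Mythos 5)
Your proof is correct and follows essentially the same route as the paper: injectivity from the non-degeneracy of the pairing of $\E=(\E_*)^*$ with $\E_*$, and surjectivity by reconstructing the section pointwise, since a fiber-wise linear smooth function restricts on each fiber to a continuous, hence bounded, functional on $\E_*$, i.e.\ an element of $\pi^{-1}(m)\subset E$. Your additional verification that $m\mapsto v_m$ is smooth---by identifying $v_m$ with the $\E$-component of $d\ell$ in the trivialization, cf.\ \eqref{dlambda}---fills in a detail the paper's proof leaves implicit, and is a welcome refinement rather than a deviation.
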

\begin{proof}Injectivity of $\lambda$ is straighforward.

To prove surjectivity, notice that any locally defined fiber-wise linear map on $E_*$ is at each point $m\in M$, by definition, given by a unique element $s(m)\in \big(\pi^{-1}_*(m)\big)^* = \pi^{-1}(m)\subset E$. In this way we obtain a section $s\in \Gamma_{loc}(E)$.
\end{proof}

\begin{thm}
Let the bundle $E_*$ be a Banach Poisson manifold with a linear Poisson bracket. Then there exists a canonically associated structure of the Banach Lie algebroid on $E$ with the Lie bracket and anchor given as follows:
\[ [X_1, X_2] = \lambda^{-1}\big(\{\lambda_{X_1},\lambda_{X_2}\}\big)\]
\[a(X) = T\pi_*\sharp(d\lambda_X)\]
for $X_1, X_2, X\in \Gamma(E)$.
\end{thm}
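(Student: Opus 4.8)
The plan is to verify directly that the proposed formulas define a genuine Banach Lie algebroid structure on $E$, exploiting the duality between sections of $E$ and fiber-wise linear functions on $E_*$ established in Lemma~\ref{lem:linPB}. Since $\lambda$ is a bijection, the bracket $[X_1,X_2]:=\lambda^{-1}(\{\lambda_{X_1},\lambda_{X_2}\})$ is well-defined provided the right-hand side is again fiber-wise linear; I would first confirm this, which amounts to checking that $\{\mathcal L_{loc}(E_*),\mathcal L_{loc}(E_*)\}\subset\mathcal L_{loc}(E_*)$. This is precisely the linearity hypothesis on the Poisson bracket, so it is immediate. The bilinearity, skew-symmetry, and Jacobi identity for $\cmm$ then transfer automatically from the corresponding properties of $\pb$ through the linear isomorphism $\lambda$, giving condition~2.\ of Definition~\ref{def:alg} for free.

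Next I would check that the anchor $a(X):=T\pi_*\circ\sharp(d\lambda_X)$ is well-defined and is a vector bundle morphism covering the identity. By Definition~\ref{def:pm}, $\sharp(d\lambda_X)$ lies in $TE_*$, and composing with $T\pi_*:TE_*\to TM$ lands in $TM$; since $\sharp(d\lambda_X)$ sits over the same base point, $a(X)$ covers the identity. To see that $a(X)$ depends only on the value of $X$ at a point (so that $a$ is a genuine bundle map $E\to TM$ rather than merely an operator on sections), I would compute $a(X)$ in a trivialization using Lemma~\ref{lem:sharp}: there $\sharp(d\lambda_X)(m,\phi)$ has tangent-to-base component $-a_{\mathrm{alg}}(v_m)$ depending only on $v_m=X(m)$, which shows fiber-wise linearity and smoothness of $a$. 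Concretely I would recover that this reconstructed anchor agrees, via Lemma~\ref{lem:pblin}, with the map characterized by $\{f\circ\pi_*,\lambda_X\}=-(a(X)f)\circ\pi_*$.

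It remains to establish the Leibniz-type identity~1.\ and the first-jet dependence~3. For the Leibniz rule I would expand $[X,fY]=\lambda^{-1}(\{\lambda_X,\lambda_{fY}\})$ using $\lambda_{fY}=(f\circ\pi_*)\cdot\lambda_Y$, apply the derivation property of $\pb$ in each argument, and use Lemma~\ref{lem:pblin} together with the definition of $a$ to identify the resulting terms as $a(X)f\cdot\lambda_Y+f\{\lambda_X,\lambda_Y\}$; applying $\lambda^{-1}$ yields $[X,fY]=a(X)f\cdot Y+f[X,Y]$. For condition~3., I would invoke the hypothesis that the Poisson bracket is non-queer, so $\{\lambda_X,\lambda_Y\}(\rho)$ depends only on $d\lambda_X(\rho)$ and $d\lambda_Y(\rho)$; by the formula~\eqref{dlambda} these differentials are determined by the first jets $j^1_m X$ and $j^1_m Y$, whence $[X,Y](m)$ depends only on the first jets of the sections. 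The main obstacle I anticipate is the well-definedness and smoothness of the anchor as a \emph{bundle} morphism: one must argue carefully that $a(X)$ is tensorial in $X$ (pointwise-determined) and that $E\ni e\mapsto a(e)$ is smooth, rather than merely verifying the identities at the level of sections. The trivialized formula from Lemma~\ref{lem:sharp} is the key tool for clearing this hurdle, since it exhibits $a$ explicitly as the algebroid anchor $a_{\mathrm{alg}}$ already appearing there.
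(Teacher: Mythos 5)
Your overall route is the same as the paper's: well-definedness of the bracket from the linearity hypothesis plus the bijectivity of $\lambda$ (Lemma~\ref{lem:linPB}), transfer of the Lie algebra axioms through $\lambda$, non-queerness of $\pb$ for the first-jet condition~3, and the Leibniz expansion of $\{\lambda_X,\lambda_{fY}\}$ using $\lambda_{fY}=(f\circ\pi_*)\lambda_Y$. However, there is one genuine gap, and it sits exactly at the step you yourself flag as the main obstacle: you establish pointwise dependence (tensoriality) of the anchor by invoking Lemma~\ref{lem:sharp}, and that is circular. Lemma~\ref{lem:sharp} belongs to the forward construction of Section~\ref{sec:poisson}: its trivialized formula $\sharp(m,\phi,\mu,x)=\big(m,\phi,-a(x),a^*(\mu)-(\ad^m_x)^*\phi\big)$ is derived \emph{from} a given algebroid structure and presupposes the data $a$ and $C_m$ --- precisely what the present theorem is constructing. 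Starting from an abstract linear Poisson bracket on $E_*$, all you know about $\sharp$ is that it exists and takes values in $TE_*$; no such trivialized formula is available, and ``$a_{\mathrm{alg}}$'' does not yet exist.

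The paper closes this step without Lemma~\ref{lem:sharp}: in a trivialization, $(a(X)f)\circ\pi_*(m,\phi)=\pair{(m,\phi,df(m),0)}{\sharp(m,\phi,d(\phi\circ v)(m),v_m)}$; the left-hand side is independent of $\phi$, and the contribution of the jet term $d(\phi\circ v)(m)$ is isolated as $\{f\circ\pi_*,\,l^\phi_v\circ\pi_*\}$ with $l^\phi_v(m)=\phi(v_m)$, which is a Poisson bracket of two fiber-wise constant functions and hence vanishes by the first part of Lemma~\ref{lem:pblin} (you use that lemma only for the fiber-wise constancy of $\{f\circ\pi_*,\lambda_X\}$, i.e.\ projectability, which is the separate, easier point). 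Note also that you cannot patch the gap by the standard finite-dimensional alternative of deducing tensoriality from $C^\infty(M)$-linearity of $a$, since bump-function and finite-decomposition arguments fail in the Banach setting --- this is the same issue behind the paper's Schauder-basis discussion of queer algebroids. Finally, a minor point: with the paper's convention $\sharp\mu=\Pi(\cdot,\mu)$ and $a(X)=T\pi_*\sharp(d\lambda_X)$, one gets $(a(X)f)\circ\pi_*=\{f\circ\pi_*,\lambda_X\}$ without the minus sign you wrote; check your sign bookkeeping against \eqref{sharp_def}.
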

\begin{proof}
This proof is an adaptation of the proof of Theorem~7.1 in \cite{cabau-pelletier_book}.

First of all, the formula for the bracket makes sense due to Lemma~\ref{lem:linPB}. The formula for the anchor makes sense since we assume that the map $\sharp$ takes values in $TE_*$, see \eqref{sharp}.

For the anchor map, note that it is the Hamiltonian vector fields for the function $\lambda_X$ pushed down to the base $M$. $\sharp(d\lambda_X)$ is indeed a section of $TE_*$ and condition \eqref{sharp} from Definition~\ref{def:pm} guaranties that it is mapped to a section of $TM$. 

We need to verify that the value of the vector field $a(X)$ at some point $m\in M$ depends only on the value of the section $X$ at the point $m$. To this end note that the action of the vector field $a(X)$ on a smooth function $f\in C^\infty(M)$ can be written as:
\[ a(X)f \circ \pi_* = \{f\circ\pi_*,\lambda_X\}=\pair{d(f\circ\pi_*)}{\sharp(d\lambda_X)} \]
Taking a look at this formula in trivialization, at first glance, using \eqref{dlambda}, $d\lambda_X$ depends both on value of $X$ at a given point as well as its derivative. 
\[ a(X)f\circ\pi_*(m,\phi) = \pair{(m,\phi,df(m),0)}{\sharp (m,\phi,d(\phi\circ v)(m), v_m)}.\]
However, since $\pi_*(m,\phi)=m$, the left-hand side doesn't depend on $\phi$. In consequence, the right-hand side cannot depend on the term $d(\phi\circ v)(m)$, which implies that $a(X)$ depends only on the value of the section $X$ at point $m$. It can be seen more directly by observing that the part depending on the derivative can be separated as follows:
\[ \pair{(m,\phi,df(m),0)}{\sharp (m,\phi,d(\phi\circ v)(m), 0)} = \{f\circ \pi_*,l_v^\phi\circ \pi_*\}, \]
where $l_v^\phi(m) = \phi\circ v_m$. Now, using Lemma~\ref{lem:pblin}, we see that this term vanishes as it is a Poisson bracket of two fiber-wise constant functions.

Let us check the conditions of Definition~\ref{def:alg}. The first one follows from direct computation:
\[ [X_1, f X_2] = \lambda^{-1}\big(\{\lambda_{X_1},\lambda_{f X_2}\}\big) = \lambda^{-1}\big(\{\lambda_{X_1},f\lambda_{X_2}\}\big)= \]
\[=\lambda^{-1}\big(f\{\lambda_{X_1},\lambda_{X_2}\} + \lambda_{X_2}\{\lambda_{X_1},f\}\big) = f[X_1,X_2]+\sharp(d\lambda_{X_1})f \cdot X_2=\]
\[=f[X_1,X_2]+a(X_1)f \cdot X_2.\]
The second one is a consequence of an analogous property for Poisson bracket. Since the Poisson bracket was assumed to be non-queer, the Lie algebroid bracket depends only on the first jet of sections, thus we get the third property.
\end{proof}

Note that in \cite{pelletier} the bijectivity of the map $\lambda$ (denoted there by $\Phi$) didn't hold and only injectivity was proven (see \cite[Lemma 4.6]{pelletier}). However the linear sub-Poisson brackets defined on the characteristic subbundle $T^\flat M$ preserve the family of functions of the form $\lambda_X$.

\section{Example: weak symplectic form on a precotangent bundle}\label{sec:weak_symplectic}

We will now consider the case $E=TM$ and assume that the precotangent bundle $T_*M$ exists. Note that in general, it might not exist for an arbitrary Banach manifold $M$. Naturally, even not every Banach space possesses a predual space: for example the space of compact operators on a separable Hilbert $\H$ space does not. However, even if modeling Banach space admits a predual, one still needs to assume that there exists an atlas on $M$ such that tangent maps of charts preserve the predual spaces. On top of that, a predual space might not be unique in general. One class of Banach manifolds, for which the existence of the precotangent bundles is established, are Banach Lie groups, see \cite[Section 7]{OR}.

In some cases, working with the precotangent bundle might be more convenient. For example, if one considers the Banach space of bounded operators on $\H$ (or a manifold modeled on it), the cotangent space involves the dual of the space of bounded operators, which is not easily described. On the other hand, the precotangent bundle involves a predual space, which is just the space of compact operators.

We begin by the usual approach: define the canonical 1-form $\theta\in \Gamma (T^*T_*M)$ on the precotangent bundle $T_*M$ in the same way it is done for the cotangent bundle:
\be \pair{\theta_\rho}{X_\rho}:=\pair{\rho}{T_\rho\pi_*X_\rho}\ee
for $\rho\in T_*M$ and $X\in \Gamma (TT_*M)$. It can be considered as the restriction of the standard canonical 1-form on $T^*M$ to the subbundle $T_*M\subset T^*M$.

In general, the form $\omega=d\theta$ is only a weak symplectic form. Similarly to the situation on the cotangent bundle (see \cite[\S 1, Theorem 3]{marsden-chernoff}), it is strong if and only if $\mathbb M$ is a reflexive Banach space. It means that it is a closed 2-form and the map
\be \flat: TT_*M \ni X \mapsto \omega (X, \cdot) \in T^*T_*M \ee
is injective, but in general not surjective. This can be easily seen by expressing $\omega$ and $\flat$ in trivialization as
\be \label{omega-triv}\omega(m,\phi)\big((m,\phi,v,\Phi),(m,\phi,w,\Psi)\big) = \pair{\Phi}w -\pair\Psi v\ee
\[ \flat : U\times \M_*\times \M \times \M_* \to U\times \M_*\times \M^* \times \M\]
\be \label{flat-triv}\flat(m,\phi, v,\Phi) = (m,\phi,\Phi, -v),\ee
where $(m,\phi)\in U\times \M_*$, $v,w\in \M$, $\Phi,\Psi\in\M_*$.
Since $\Phi$ is an element of $\M_*$, which in general is only a proper subset of $\M^*$, the image of $\flat$ is not the whole $T^*T_*M$.

Let us denote the image of $\flat$ by $T^\flat T_*M\subset T^*T_*M$. Since $\flat$ is injective, it is possible to consider its inverse defined on $T^\flat T_*M$
\be \sharp := \flat^{-1}:T^\flat T_*M\to TT_*M,\ee
\be \label{sharp-triv}\sharp(m,\phi, \Phi, v) = (m,\phi,-v,\Phi).\ee

The set of functions such that their derivative belongs to $T^\flat T_*M$ will be denoted by
  \be C^\infty_\flat(T_*M):=\{ f\in C^\infty(T_*M)\;|\; df\in T^\flat T_*M \}.\ee
Using the formula \eqref{flat-triv} for $\flat$ in trivialization, it can be seen that $C^\infty_\flat(T_*M)$ can be equivalently defined as 
\be C^\infty_\flat(T_*M):=\{ f\in C^\infty(T_*M)\;|\; \tfrac{\partial f}{\partial m}\in \M_* \}.\ee

Now in a standard way one can use $\sharp$ to introduce a sub-Poisson bracket of functions $f,g\in C^\infty_\flat(T_*M)$ related to the canonical weak symplectic form $\omega$:
  \be \{f,g\}_\omega := \omega(\sharp df, \sharp dg).\ee
It can be seen that $\{f,g\}_\omega$ is again an element of $C^\infty_\flat(T_*M)$, see e.g. \cite[Proposition 3.11]{tumpach-bruhat} or \cite[Lemma 7.1]{cabau-pelletier_book}. Note that this bracket is localizable, so it can be defined for local versions of $C^\infty_\flat(T_*M)$.

The expression for the sub-Poisson bracket $\pb_\omega$ in the trivialization follows from 
\eqref{omega-triv} and \eqref{sharp-triv}:
\be \label{pb_omega}
\{f,g\}_\omega (m,\phi) = -\pair{\frac{\partial f}{\partial m}}{\frac{\partial g}{\partial \phi}}+\pair{\frac{\partial g}{\partial m}}{\frac{\partial f}{\partial \phi}}.\ee
According to the definition of the bracket $\pb_\omega$, this formula is only valid for $f,g$ such that $\frac{\partial f}{\partial m}, \frac{\partial g}{\partial m}\in \M_*$. However, the expression on the right-hand side also makes sense in the general case where $\frac{\partial f}{\partial m}, \frac{\partial g}{\partial m}\in \M^*$. By extending it in this way we obtain the Poisson bracket given by the canonical Banach Lie algebroid structure of $TM$.

\begin{thm}
The Poisson bracket \eqref{alg-bracket} induced by the algebroid structure on the precotangent bundle $T_*M$ coincides with $\pb_\omega$ 
for functions belonging to $C^\infty_\flat(T_*M)$.
\end{thm}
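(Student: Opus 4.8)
The plan is to show that the two brackets agree by comparing their coordinate expressions in a trivialization. We already have an explicit formula for each bracket: the sub-Poisson bracket $\pb_\omega$ is given in trivialization by \eqref{pb_omega}, and the algebroid bracket is given by \eqref{alg-bracket-gen} from Proposition~\ref{prop:pb_formula}. So the heart of the proof is to specialize \eqref{alg-bracket-gen} to the present situation $E=TM$ and observe that it reduces exactly to \eqref{pb_omega}.

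First I would record what the canonical Lie algebroid structure on $E=TM$ contributes to the local data appearing in \eqref{alg-bracket-gen}. The anchor is $a=\id$, and in the present identifications the predual fiber $\E_*$ of $TM$ is $\M_*$ while $\E=\M$. Crucially, for the commutator-of-vector-fields bracket the field of skew bilinear maps $C_m$ appearing in Proposition~\ref{prop:lie_bracket_local} vanishes identically in a trivialization: the local expression \eqref{lie_bracket_local} of the Lie bracket of two vector fields reduces to $(Dw)(v)-(Dv)(w)$, i.e.\ only the anchor (derivative) terms $a(v)Y - a(w)X$ survive, so $C_m\equiv 0$. I would justify this by noting that constant vector fields in a trivialization commute, so the purely algebraic term $C_m(v,w)$ must be zero. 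Consequently the third summand $\pair{C_m(\tfrac{\partial f}{\partial\phi},\tfrac{\partial g}{\partial\phi})}{\phi}$ in \eqref{alg-bracket-gen} drops out.

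With $C_m=0$ and $a=\id$, formula \eqref{alg-bracket-gen} collapses to
\[
\{f,g\}(m,\phi)=\pair{\tfrac{\partial f}{\partial\phi}}{\tfrac{\partial g}{\partial m}}-\pair{\tfrac{\partial g}{\partial\phi}}{\tfrac{\partial f}{\partial m}},
\]
where the pairing is between $\E=\M$ and $\M^*$. This is precisely the negative of the right-hand side of \eqref{pb_omega} up to the order of the pairing arguments; I would check the sign and slot conventions carefully, since \eqref{pb_omega} writes the pairing as $\pair{\tfrac{\partial g}{\partial m}}{\tfrac{\partial f}{\partial\phi}}$ with $\tfrac{\partial\cdot}{\partial m}\in\M_*$ in the first slot and $\tfrac{\partial\cdot}{\partial\phi}\in\M$ in the second, whereas \eqref{alg-bracket-gen} uses the dual pairing $\M\times\M^*$. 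For $f,g\in C^\infty_\flat(T_*M)$ we have $\tfrac{\partial f}{\partial m},\tfrac{\partial g}{\partial m}\in\M_*\subset\M^*$, so both expressions use the same honest pairing and the two formulas are literally the same function.

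The main obstacle I anticipate is purely bookkeeping: matching the duality pairings and the signs between the two formulas, and confirming that restricting to $C^\infty_\flat(T_*M)$ is exactly what is needed for the algebroid bracket \eqref{alg-bracket-gen} to land in the domain where $\pb_\omega$ is defined. The genuinely mathematical point—that $C_m$ vanishes for the commutator algebroid—is elementary, so no hard analytic estimate is involved; the content of the theorem is that the \emph{extension} of $\pb_\omega$ beyond $C^\infty_\flat(T_*M)$ (made possible because the right-hand side of \eqref{pb_omega} continues to make sense for $\tfrac{\partial\cdot}{\partial m}\in\M^*$) is exactly the canonical algebroid bracket. I would close by remarking that on $C^\infty_\flat(T_*M)$ both sides coincide, which is the asserted statement.
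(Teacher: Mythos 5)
Your proposal is correct and takes essentially the same route as the paper: work in a trivialization, note $a=\id$ and $C_m=0$ for the commutator algebroid (the paper states the latter just after the theorem, with the same justification), so \eqref{alg-bracket-gen} collapses to the two anchor terms, which agree with \eqref{pb_omega} on $C^\infty_\flat(T_*M)$ where $\tfrac{\partial f}{\partial m},\tfrac{\partial g}{\partial m}\in\M_*\subset\M^*$. One small slip in wording: after swapping the slots of the duality pairing the two expressions are \emph{equal}, not ``precisely the negative,'' as your own concluding sentence correctly asserts.
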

\begin{proof}
We consider the trivialization of $T_*M$ given by the dual of the tangent map of a chart for a chart domain $U$, restricted to the predual space. Since the anchor $a$ is the identity, the formula \eqref{alg-bracket-gen} simplifies to
\[ \{f,g\}(m,\phi) = 
\pair{\tfrac{\partial f}{\partial \phi}}{\tfrac{\partial g}{\partial m}}-\pair{\tfrac{\partial g}{\partial \phi}}{\tfrac{\partial f}{\partial m}},
\]
which coincides with \eqref{pb_omega} for $f,g\in C^\infty_\flat(T_*M)$. It is a well-defined Poisson bracket by Theorem~\ref{algebroid-poisson}.
\end{proof}

In this way, using the algebroid approach, we obtain the Poisson bracket on $C^\infty(T_*M)$, which is the extension of $\pb_\omega$ given by the canonical symplectic form on $T_*M$.

Note that $T_*M$ is not a Poisson manifold in the sense of Definition~\ref{def:pm} in a non-reflexive case, since the conditions of Theorem~\ref{thm:pb} are not satisfied. Namely, the anchor is the identity map $a=\id:TM\to TM$. The dual map $a^*$ is the identity map $T^*M\to T^*M$. In consequence, it doesn't take values in the precotangent bundle $T_*M$. The second condition from Theorem~\ref{thm:pb}, on the other hand, is always satisfied as the Lie bracket is the commutator of the vector fields and it doesn't depend on the value of vector fields at a point. In consequence $C_m=0$.


\section{Example: trivial bundle $\ell^2 \times \ell^\infty\to \ell^2$}\label{sec:trivial}

We now consider a trivial Banach vector bundle $E\to M=\ell^2 \times \ell^\infty\to\ell^2$. Sections of $E$ can be identified with maps $X:\ell^2\to\ell^\infty$. We fix a continuous linear map $A:\ell^\infty\to \ell^2$. Now, we define the algebroid bracket on such sections as:
\[ [X, Y](x) = Y'(x)AX(x) - X'(x)AY(x). \]
We treat the derivative $X'(x)$ as a linear map $\ell^2\to\ell^\infty$. The anchor in that case is
\[ a(x,X) = (x,AX)\in \ell^2\times\ell^2\cong T\ell^2. \]
Let us write down the formula for the bracket in coordinates for some particular choice of $A$. By superscript $j$ we will denote $j^{th}$ element of the sequence. Consider for example $(AX)^k = \tfrac{1}{k} X^k$. Then we get
\[ [X, Y]^j(x) = \sum_{k=1}^\infty \frac 1 {k} \big(X^k(x) \partial_k  Y^j(x) - Y^k(x) \partial_k  X^j(x)\big), \]
where by $\partial_k$ we denote the partial derivative $\frac\partial{\partial x^k}$.

\begin{prop}
The bundle $\ell^2 \times \ell^\infty$ with given anchor and Lie bracket is a Banach Lie algebroid.
\end{prop}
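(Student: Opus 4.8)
The plan is to verify directly the three conditions of Definition~\ref{def:alg} together with the assertion that $(\Gamma(E),\cmm)$ is a Banach Lie algebra. Throughout, a section $X\in\Gamma(E)$ is a smooth map $X\colon\ell^2\to\ell^\infty$, its derivative $X'(x)\in L(\ell^2,\ell^\infty)$ is a bounded linear map, and its second derivative $X''(x)\in L(\ell^2,\ell^2;\ell^\infty)$ is a bounded \emph{symmetric} bilinear map. Since $A$ is bounded and the evaluation $L(\ell^2,\ell^\infty)\times\ell^2\to\ell^\infty$ is bounded bilinear, the expression $[X,Y](x)=Y'(x)AX(x)-X'(x)AY(x)$ is a smooth $\ell^\infty$-valued map, so $\cmm$ indeed lands in $\Gamma(E)$; bilinearity and skew-symmetry are evident from the formula. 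The anchor $(x,v)\mapsto(x,Av)$ is a smooth bundle morphism covering the identity precisely because $A$ is linear and bounded, so $AX(x)\in\ell^2\cong T_x\ell^2$ as required.

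First I would dispose of condition~1 by a direct computation. Writing $(fY)'(x)h=df(x)(h)\,Y(x)+f(x)Y'(x)h$ and using $A(fY)(x)=f(x)AY(x)$ (linearity of $A$ in the fiber) yields
\[
[X,fY](x)=df(x)\big(AX(x)\big)\,Y(x)+f(x)\,[X,Y](x),
\]
and since $\big(a(X)f\big)(x)=df(x)(AX(x))$ this is exactly $[X,fY]=a(X)f\cdot Y+f[X,Y]$. Condition~3 is equally immediate: the value $[X,Y](x)$ involves only $X(x),X'(x),Y(x),Y'(x)$, hence depends solely on the first jets $j^1_xX$, $j^1_xY$; in particular the bracket is local, so the localizability requirement is automatic.

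The substantive point is the Jacobi identity, and here the plan is to reformulate the bracket through an auxiliary operator rather than to expand the triple bracket by brute force. For a section $X$ introduce the directional-derivative operator $\mathcal D_X$ acting on smooth maps $\Phi\colon\ell^2\to\ell^\infty$ by $(\mathcal D_X\Phi)(x)=\Phi'(x)\big(AX(x)\big)$, so that $[X,Y]=\mathcal D_XY-\mathcal D_YX$. Two facts drive the argument. Linearity of $A$ gives $(AX)'=A\,X'$, whence $A[X,Y]=[AX,AY]$, the commutator of the vector fields $AX,AY$ on $\ell^2$; this is the compatibility $a([X,Y])=[a(X),a(Y)]$. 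Differentiating $\mathcal D_X\mathcal D_Y\Phi-\mathcal D_Y\mathcal D_X\Phi$ once more and invoking the symmetry of $\Phi''(x)$, the second-order terms cancel and one obtains the operator identity
\[
[\mathcal D_X,\mathcal D_Y]\Phi=\mathcal D_{[X,Y]}\Phi .
\]
Granting this, the Jacobiator $[[X,Y],Z]+[[Y,Z],X]+[[Z,X],Y]$ expands into twelve terms of the form $\mathcal D_P\mathcal D_QR$ that cancel in cyclic pairs, so Jacobi holds identically; combined with skew-symmetry and the continuity inherited from boundedness of $A$, this makes $(\Gamma(E),\cmm)$ a Banach Lie algebra and completes condition~2.

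I expect the main obstacle to be precisely the operator identity $[\mathcal D_X,\mathcal D_Y]=\mathcal D_{[X,Y]}$, where one must keep careful track of the infinite-dimensional Fréchet derivatives. The crucial inputs are the boundedness of $A$ (ensuring $\mathcal D_X$ preserves smooth sections and that $(AX)'=AX'$) and the Schwarz symmetry of $\Phi''(x)$, which is exactly what kills the leading second-order contribution $\Phi''(AX,AY)-\Phi''(AY,AX)$. Once this identity is secured, the Jacobi identity is purely formal, and all the remaining conditions have already been checked by inspection.
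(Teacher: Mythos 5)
Your proof is correct, and on conditions 1 and 3 it coincides with the paper's: the same Leibniz-rule computation (your version is in fact written more carefully — the paper's final line reads $AX(f)(x)+f(x)[X,Y](x)$, dropping the factor $Y(x)$ that your formula correctly retains) and the same observation that $[X,Y](x)$ involves only $X(x),X'(x),Y(x),Y'(x)$, so the bracket is localizable and depends only on first jets. The genuine divergence is condition 2: the paper dismisses the Jacobi identity with ``also follows by straightforward calculation'' and gives no details, whereas you actually prove it, and in a structured way, by introducing $\mathcal D_X\Phi=\Phi'(AX)$ so that $[X,Y]=\mathcal D_XY-\mathcal D_YX$, and establishing the operator identity $[\mathcal D_X,\mathcal D_Y]=\mathcal D_{[X,Y]}$, whose sole analytic input is the symmetry of the second Fr\'echet derivative killing $\Phi''(AX,AY)-\Phi''(AY,AX)$; I verified the twelve-term cyclic cancellation you describe and it checks out exactly. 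This packaging buys more than the brute-force expansion the paper presumably intends: it isolates precisely where Schwarz symmetry enters, it yields the anchor compatibility $A[X,Y]=[AX,AY]$ as a one-line byproduct of $(AX)'=A\,X'$, and it visibly generalizes verbatim to any trivial bundle $\M\times\E\to\M$ with a bounded $A:\E\to\M$ — which is exactly the generalization the paper asserts at the end of Section 6 without proof. One caveat you share with the paper: the claim that $(\Gamma(E),\cmm)$ is a \emph{Banach} Lie algebra is left without a specified topology on sections, and your appeal to boundedness of $A$ is no more (but also no less) precise than the paper's bare assertion on this point.
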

\begin{proof}
Both the Lie bracket and the anchor are continuous. The bracket is localizable and depends only on the first jet of sections (condition 3. of Definition~\ref{def:alg}).

Condition 1. of Definition~\ref{def:alg} follows by straightforward computation. Let $X,Y:\ell^2\to\ell^\infty$, $f:\ell^2\to\R$. We have
\[ [X,fY](x) = (fY)'(x) AX(x) - X'(x)Af(x)Y(x) = \]
\[= f'(x)(AX(x))Y(x) + f(x)Y'(x)AX(x) - f(x)X'(x)AY(x) =\]
\[ = AX(f)(x) + f(x)[X,Y](x).\]
In the last expression $AX(f)(x)$ should be understood as a vector field $AX$ acting on the function $f$.

Condition 2. of Definition~\ref{def:alg} also follows by straightforward calculation.
\end{proof}

The predual bundle to $\ell^2\times\ell^\infty$ is $\ell^2\times\ell^1$. From Theorem~\ref{thm:pb} we get the condition for it to be a Banach Poisson manifold.

\begin{prop}
The predual bundle $\ell^2\times\ell^1$ equipped with the canonical Poisson structure \eqref{alg-bracket-gen} is a Banach Poisson manifold in the sense of Definition~\ref{def:pm} if and only if $A^*(\ell^2)\subset \ell^1$.
\end{prop}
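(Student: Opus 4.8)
The plan is to apply Theorem~\ref{thm:pb} directly to this concrete situation, reducing each of its two abstract conditions to statements about the specific bundle $\ell^2\times\ell^1$. Here the typical fiber is $\E=\ell^\infty$ with predual $\E_*=\ell^1$ and dual $\E^*=(\ell^\infty)^*$, while the base is modeled on $\M=\ell^2$, which is reflexive so that $\M^*\cong\ell^2$. The two conditions of Theorem~\ref{thm:pb} to verify are $a^*(T^*M)\subset E_*$ and $(\ad^m_x)^*(\E_*)\subset\E_*$. My strategy is to show that the second condition is automatically satisfied (just as in Section~\ref{sec:weak_symplectic}) and that the first condition reduces precisely to the stated inclusion $A^*(\ell^2)\subset\ell^1$.

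\medskip

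\textbf{Second condition.} First I would examine $C_m$ for this algebroid. Reading off the local Lie bracket formula \eqref{lie_bracket_local} and comparing it with the explicit bracket $[X,Y](x)=Y'(x)AX(x)-X'(x)AY(x)$, I claim that the bilinear term $C_m$ vanishes identically. Indeed, the entire bracket is built from $a(X)Y=Y'(x)AX(x)$ and $a(Y)X=X'(x)AY(x)$, which are exactly the two anchor-derivative terms in \eqref{lie_bracket_local}, leaving no residual skew-symmetric algebraic term; hence $C_m=0$, so $\ad^m_x=0$ for all $x$, and therefore $(\ad^m_x)^*=0$, which trivially maps $\ell^1$ into $\ell^1$. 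Thus the second condition of Theorem~\ref{thm:pb} holds unconditionally.

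\medskip

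\textbf{First condition.} Next I would compute $a^*$. The anchor is $a(x,X)=(x,AX)$, so fiberwise it is the continuous linear map $A:\ell^\infty\to\ell^2$ from the typical fiber $\E=\ell^\infty$ to the model space $\M=\ell^2$ of the base. Its dual $a^*:T^*M\to E^*$ is fiberwise the transpose $A^*:\M^*\to\E^*$, i.e. $A^*:(\ell^2)^*\cong\ell^2\to(\ell^\infty)^*$. The condition $a^*(T^*M)\subset E_*$ then says precisely that the image of this transpose lands in the predual fiber $\E_*=\ell^1$ rather than in the full dual $(\ell^\infty)^*$, that is, $A^*(\ell^2)\subset\ell^1$. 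Combining this with the automatic validity of the second condition, Theorem~\ref{thm:pb} yields that $\ell^2\times\ell^1$ is a Banach Poisson manifold if and only if $A^*(\ell^2)\subset\ell^1$.

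\medskip

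\textbf{Main obstacle.} The only genuinely nontrivial point is identifying $C_m=0$, since this requires matching the given concrete bracket against the canonical local decomposition \eqref{lie_bracket_local} and confirming that nothing beyond the two anchor terms survives; once this is settled the rest is a direct reading of Theorem~\ref{thm:pb}. I would take care to note that since $\M=\ell^2$ is reflexive, identifying $\M^*$ with $\ell^2$ is harmless, so that $A^*$ really is a map out of $\ell^2$ as stated in the proposition.
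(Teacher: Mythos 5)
Your proposal is correct and follows essentially the same route as the paper's proof: both verify the two conditions of Theorem~\ref{thm:pb}, observing that $C_m=0$ (hence $(\ad^m_x)^*=0$) makes the second condition trivial, and that the first condition fiberwise reduces to $A^*(\ell^2)\subset\ell^1$. Your extra care in matching the concrete bracket against \eqref{lie_bracket_local} and in using the reflexivity of $\ell^2$ merely spells out details the paper leaves implicit.
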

\begin{proof}
The condition $a^*(T^*M)\subset E_*$ of Theorem~\ref{thm:pb} in this case reduces to $A^*(\ell^2)\subset \ell^1$.

The other condition $(\ad^m_x)^*(\E_*)\subset \E_*$ is trivial as the bilinear map $C_x$ defined in \eqref{lie_bracket_local} vanishes. In consequence $(\ad_x^m)^*=0$.
\end{proof}

Note that for a particular example $(AX)^k = \tfrac{1}{k} X^k$, the condition of the previous Proposition holds. In consequence, we obtain linear Banach Poisson structure on $\ell^2\times\ell^\infty$.

It can be easily seen that this example can be generalized to the arbitrary trivial bundle $\M\times\E\to\M$, where $\M$ and $\E$ are Banach spaces such that $\E$ admits a predual.

\section*{Acknowledgement}
The authors thank Prof. Fernand Pelletier for valuable advice and help while writing this manuscript.

This research was partially supported by joint National Science Centre, Poland (number 2020/01/Y/ST1/00123) and Fonds zur Förderung der wissenschaftlichen Forschung, Austria (number I 5015-N) grant ``Banach Poisson--Lie groups and integrable systems.''

The first author thanks the Erwin Schr\"odinger Institute for its hospitality during the thematic programme ``Infinite-dimensional Geometry: Theory and Applications'' in January--February 2025. 

The stimulating atmosphere of the Workshop on Geometric Methods in Physics in Białowieża/Białystok, Poland, is also acknowledged, where both authors could work and discuss this paper.

\small

\end{document}